 \def\Spnr{Sp(d,\R)}
 \def\Gltwonr{GL(2d,\R)}
\newcommand{\stft}{short-time Fourier transform}
\newcommand{\modsp}{modulation space}
\newtheorem{theorem}{Theorem}[section]
\newtheorem{lemma}[theorem]{Lemma}
\newtheorem{proposition}[theorem]{Proposition}
\newtheorem{definition}[theorem]{Definition}
\newtheorem{example}[theorem]{Example}
\newtheorem{remark}[theorem]{Remark}
\newcommand{\beqa}{\begin{eqnarray*}}
\newcommand{\eeqa}{\end{eqnarray*}}
\newcommand{\field}[1]{\mathbb{#1}}
\newcommand{\bR}{\field{R}}        
\newcommand{\bN}{\field{N}}        
\newcommand{\bZ}{\field{Z}}        
\newcommand{\bC}{\field{C}}        
\def\G{\mathcal{G}}
\def\la{\lambda}
\def\eps{\epsilon}
\def\cS{\mathcal{S}}
\def\cG{\mathcal{G}}
\def\cA{\mathcal{A}}
\def\a{\aleph}
\def\rd{\bR^d}
\def\rdd{{\bR^{2d}}}
\def\zdd{{\bZ^{2d}}}
\def\lrd{L^2(\rd)}
\def\intrd{\int_{\rd}}
\def\R{\right)}
\def\<{\left<}
\def\>{\right>}
\def\inv{^{-1}}
\def\mv1{M_v^1}
\def\phas{(x,\o )}
\def\mn{(m,n)}
\def\mn'{(v_1,u_2')}
\def\Spnr{Sp(d,\R)}
\newcommand{\abs}[1]{\lvert#1\rvert}
\def\o{\eta}
\def\a{\alpha}
\def\b{\beta}
\def\R{\mathbb{R}}
\def\Ren{\mathbb{R}^d}
\def\sch{\mathcal{S}}
\def\Fur{\mathcal{F}}
\def\Sn2{S_{2}(L^{2}(\Ren))}
\def\S1{S_{1}(L^{2}(\Ren))}
\def\sig00{\sigma_{0,0}}
\def\la{\langle}
\def\ra{\rangle}
\newcommand{\A}{\mathcal{A}}
\begin{document}
\begin{abstract}
We investigate the sparsity of the Gabor-matrix representation of Fourier integral operators with a phase having quadratic growth. It is known that such an infinite matrix is sparse and well organized, being in fact concentrated along the graph of the corresponding canonical transformation. Here we show that, if the phase and symbol have a regularity of Gevrey type of order $s>1$ or analytic ($s=1$), the above decay is in fact sub-exponential or exponential, respectively. We also show by a counterexample that ultra-analytic regularity ($s<1$) does not give super-exponential decay. This is in sharp contrast to the more favorable case of pseudodifferential operators, or even (generalized) metaplectic operators, which are treated as well.
\end{abstract}

\title{Exponentially sparse representations of Fourier integral operators}

\author{Elena Cordero,  Fabio Nicola  and Luigi Rodino}
\address{Dipartimento di Matematica,
Universit\`a di Torino, via Carlo Alberto 10, 10123 Torino, Italy}
\address{Dipartimento di Scienze Matematiche,
Politecnico di Torino, corso Duca degli Abruzzi 24, 10129 Torino,
Italy}
\address{Dipartimento di Matematica,
Universit\`a di Torino, via Carlo Alberto 10, 10123 Torino, Italy}

\email{elena.cordero@unito.it}
\email{fabio.nicola@polito.it}
\email{luigi.rodino@unito.it}

\subjclass[2010]{35S30, 35A20, 42C15}
\keywords{Fourier integral operators, Gelfand-Shilov spaces, short-time Fourier
 transform, Gabor frames, sparse representations, Schr\"odinger equations}
\maketitle
\section{Introduction}
We consider Fourier integral operators (FIOs) in the reduced form 
\begin{equation}\label{uno}
Tf(x)=\int_{\rd} e^{2\pi i \Phi(x,\eta)}\sigma(x,\eta)\widehat{f}(\eta)\,d\eta
\end{equation}
of the type of those in \cite{AS78}, namely the amplitude $\sigma(z)$, $z=(x,\eta)$, is in $S^0_{0,0}$, i.e. $\partial^\alpha_z\sigma(z)$ is bounded for every $\alpha$ and the real-valued phase function $\Phi(z)$, satisfying the standard nondegeneracy condition, belongs to $S^{(2)}_{0,0}$, i.e. $\partial^\alpha_z \Phi(z)$ is bounded for $|\alpha|\geq 2$. Such FIOs represent the propagators at a fixed time $t>0$, for the Schr\"odinger equations
\begin{equation}\label{due}
D_t u+a^w(t,x,D_x) u=0,\quad u|_{t=0}=f(x),
\end{equation} 
with real-valued Hamiltonian $a(t,x,\xi)$ belonging to $S^{(2)}_{0,0}$ uniformly in $t$, see for example Tataru \cite{7mail} and Bony \cite{bony1,bony3}.  \par
In \cite{fio3,7mail} it was proved that the Gabor matrix representation of $T$ is concentrated along the graph of the canonical transformation $\chi$ determined by $\Phi$, and provides optimal sparsity. The Gabor representation was then used to discuss the boundedness properties of $T$, cf.\ \cite{fio5,fio1}, and define Wiener algebras of global FIOs containing operators of type \eqref{uno}, see \cite{jmpa}. \par
In the present paper, cf.\ Section 3, we shall present a stronger sparsity result, with exponential decay, for the case of analytic-Gevrey functions, namely when we have in \eqref{uno} for some $s\geq1$, 
\begin{equation}\label{tre}
|\partial^\alpha\Phi(z)|\lesssim C^{|\alpha|}(\alpha!)^s,\quad \alpha\in\mathbb{N}^{2d},\ |\alpha|\geq 2,\ z\in\rdd
\end{equation}
and similarly for the amplitude: 
\begin{equation}\label{quattro}
|\partial^\alpha \sigma(z)|\lesssim C^{|\alpha|}(\alpha!)^s,\quad \alpha\in\mathbb{N}^{2d},\ z\in\rdd.
\end{equation}
As a side result, we shall deduce boundedness of $T$ on $S^s_s(\rd)$, Gelfand-Shilov spaces (basic definitions and properties for these spaces are recalled in the preliminary Section 2). \par
We shall not give explicit applications to the general Schr\"odinger equation \eqref{due} in the present paper. As a matter of fact, a precise version of \cite{AS78} in analytic-Gevrey case, i.e. when the estimates of the type \eqref{tre} are satisfied by the Hamiltonian $a(t,x,\xi)$, is missing in the literature as far as we know. Note however that there is a number of papers where the Schr\"odinger propagators are treated in the analytic framework under decay assumptions for $a$, $\Phi$, $\sigma$, see for example \cite{K-T,M-N-S2,M-N-S2bis,M-R-Z1ter,R-Z1,R-Z1bis}; cf.\ also \cite{C-N1,C-N2} and \cite[Chapter 6]{NR} for standing wave solutions. \par
Let us also mention the reach literature concerning the different case of the H\"orman\-der's FIOs \cite{hormander}, i.e. positive homogeneity of degree $1$ with respect to $\eta$ for $\Phi(x,\eta)$ and corresponding decay estimates in \eqref{tre}, \eqref{quattro}, mainly addressed to the study of the hyperbolic equations. For such FIOs in the analytic-Gevrey category see the bibliography of \cite{rodino-book} concerning the intensive production of the years '80-'90. The researches in this area are indeed extremely active also nowadays, with applications to weakly hyperbolic problems in Gevrey classes. \par

In the above mentioned literature, the analytic regularity $s=1$ is regarded as optimal result. Instead, when dealing with the Gelfand-Shilov classes $S^s_s(\rd)$, it is natural to question whether we can go beyond the barrier $s=1$, getting super-exponential sparsity and ultra-analytic regularity, i.e. boundedness in Gelfand-Shilov spaces for $1/2\leq s <1$. As we shall clarify in Section 4, this is possible if and only if the phase function $\Phi(x,\eta)$ is quadratic in $x,\eta$. Such propagators are obtained from \eqref{due} when $a(t,x,\xi)$ has quadratic principal part in $(x,\xi)$. The corresponding operators $T$ in \eqref{uno}, with amplitude satisfying \eqref{quattro} for $1/2\leq s<1$, are studied in Section 5. \par
In the second part of this introduction we want to give a short presentation to Gabor frames, addressing to non-expert readers. Generally speaking: paradigm of the applications of harmonic analysis to the study of operators and function spaces is the decomposition/reconstruction into ``wave packets'': Fourier series, wavelets, paraproducts, etc., see the survey work \cite{1mail} and also \cite{2mail} for applications to dispersive equations and the restriction theorem. We may say that every class of symbols, i.e. every class of partial differential equations, requires a corresponding partition of the phase space into wave packets, cf.\ \cite{5mail}. Consequently, we may represent the propagator as an infinite matrix, and the chosen partition works effectively for the problem under investigation if the matrix is sparse and well-organized. This means that the propagator re-arranges the wave packets with a controlled number of overlapping of supports, granting continuity on function spaces.\par
For wave equations and H\"ormander's FIOs let us mention the pioneering work of Cord\'oba and Fefferman \cite{B18}, the second dyadic decomposition of Seeger, Sogge and Stein \cite{6mail} and the phase space transform of Tataru and Geba \cite{4mail}; see also Tataru \cite{3mail} for applications to wave equations with non-smooth coefficients.\par
Numerically stable treatments of H\"ormander's FIOs were carried on by Cand\`es, Demanet \cite{candes,cddy} and Guo, Labate \cite{guo-labate}, wave packets being represented by curvelets and shearles.
\par
Gabor frames, used initially for problems in Signal Theory and Time-frequency Analysis, cf.\ \cite{ibero13,gabor,grochenig}, turn out to be the correct setting for Schr\"odinger propagators, at least when in the Hamiltonian the space variables $x$ and their duals $\xi$ play symmetric role, as we have in the $S^0_{0,0}$ class. This means that the microlocal propagation of singularities is identified by the canonical transformation, modulo errors which we may estimate parithetically in the $x$ and $\xi$ variables, see 
 \cite{bony1,bony3,fio5,jmpa,fio1,fio3,7mail}, mentioned before.

To be definite, let us recall some basic definition. 
Let $\Lambda=A\zdd$  with $A\in GL(2d,\R)$ be a lattice
of the time-frequency plane. Consider the time-frequency-shifts
\begin{equation}\label{intro1}
g_{\lambda}=g_{\lambda_1,\lambda_2}=e^{2\pi i \lambda_2 x}g(x-\lambda_1),\quad \lambda=(\lambda_1,\lambda_2)\in\Lambda.
\end{equation}
 The set  of
time-frequency shifts $\G(g,\Lambda)=\{g_\lambda:\,
\lambda\in\Lambda\}$ for a  non-zero $g\in L^2(\rd)$ is called a
Gabor system. The set $\G(g,\Lambda)$   is
a Gabor frame, if there exist
constants $A,B>0$ such that
\begin{equation}\label{gaborframe}
A\|f\|_2^2\leq\sum_{\lambda\in\Lambda}|\langle f,g_\lambda\rangle|^2\leq B\|f\|^2_2,\qquad \forall f\in L^2(\rd).
\end{equation}
 If \eqref{gaborframe} is satisfied, then there exists a dual window $\gamma\in L^2(\rd)$, such that $\cG(\gamma,\Lambda)$ is a frame, and every $f\in L^2(\rd)$ possesses the frame expansions
 \[
 f=\sum_{\lambda\in\Lambda}\langle f,g_\lambda\rangle\gamma_\lambda=\sum_{\lambda\in\Lambda}\langle f,\gamma_\lambda\rangle g_\lambda
 \]
 with unconditional convergence in $L^2(\rd)$.\par
 The Gabor decomposition of an operator $T$ is then as follows:
$$Tf(x)=\sum_{\mu\in\Lambda}\sum_{\lambda\in\Lambda}\underbrace{\la T g_\lambda,g_\mu\ra}_{ M_{\mu \,\lambda}} c_\lambda \gamma_\mu,
\quad \mbox{with}\quad c_\lambda =\la f,\gamma_\lambda\ra.$$

So the action of the operator $T$  above can be read on the coefficient space as
 \[
 \{c_{\lambda}\}_{\lambda\in\Lambda}\mapsto\big\{\sum_{\lambda\in\Lambda}\langle T g_\lambda,g_\mu\rangle c_\lambda\big\}_{\mu\in\Lambda},
 \]
 i.e. it is represented as the infinite matrix $\{M_{\mu \,\lambda}\}_{\mu,\lambda\in\Lambda}=\{\langle T g_\lambda,g_\mu\rangle\}_{\mu,\lambda\in\Lambda}$, which we call the Gabor matrix of $T$. \par
 We can now describe the main results of the paper. Let $T$ be defined as in \eqref{uno}, with non-degenerate phase function $\Phi$ satisfying \eqref{tre} and amplitude $\sigma$ satisfying \eqref{quattro}. Let $\chi:\rdd\to\rdd$ be the canonical transformation associated to $\Phi$. In the generic case $s\geq1$, fix a window $g\in S^{s/2}_{s/2}(\rd)$. Then for some $\epsilon>0$
 \begin{equation}\label{ast}
 |M_{\mu\lambda}|\lesssim \exp\big(-\epsilon|\mu-\chi(\lambda)|^{1/s} \big);
 \end{equation}
 see Theorem \ref{CGelfandPseudo} below. Besides, if $\Phi$ is quadratic then \eqref{ast} keeps valid for $s\geq 1/2$, for any choice of the window $g $ in $S^s_s(\rd)$, see Theorem \ref{CGelfandPseudometap}. The Gaussian, belonging to $S^{1/2}_{1/2}(\rd)$ would work as window in any case. Sparsity and boundedness follow easily, see Propositions \ref{propo} and \ref{gsooo}, whereas we refer to \cite{fio5} for applications to the problem of propagation of analytic singularities for Schr\"odinger equations.\par
  A class of counterexamples to the validity of \eqref{ast} when $s<1$ and $\Phi$ is not a quadratic polynomial is given in Proposition \ref{nometa}.\par
 \section{Preliminaries}
\subsection{Notations}
The Schwartz class is denoted by
$\sch(\Ren)$, the space of tempered
distributions by  $\sch'(\Ren)$.   We
use the brackets  $\la f,g\ra$ to
denote the extension to $\sch '
(\Ren)\times\sch (\Ren)$ of the inner
product $\la f,g\ra=\int f(t){\overline
{g(t)}}dt$ on $L^2(\Ren)$.

 The Fourier
transform is normalized to be ${\hat
  {f}}(\o)=\Fur f(\o)=\int
f(t)e^{-2\pi i t\o}dt$.

Translation and modulation operators, $T$ and $M$ are defined by
$$
T_x f(\cdot) = f(\cdot - x) \;\;\; \mbox{ and } \;\;\;
 M_x f(\cdot) = e^{2\pi i x \cdot} f(\cdot), \;\;\; x \in {\bR}^d.
$$
The following relations hold
\begin{equation}\label{base}
M_y T_x  = e^{2\pi i x  y } T_x M_y, \;\;
 (T_x f)\hat{} = M_{-x} \hat f, \;\;
  (M_x f)\hat{} = T_{x} \hat f, \;\;\;
  x,y \in {\bR}^d,  f,g \in L^2 ({\bR}^d).
\end{equation}

Throughout the paper, we
shall use the notation
$A\lesssim B$ to express the inequality
$A\leq c B$ for a suitable
constant $c>0$, and  $A
\asymp B$  for the equivalence  $c^{-1}B\leq
A\leq c B$.

The letter $ C $ denotes a positive constant, not necessarily the same at
every appearance.\par

\subsection{Gelfand-Shilov Spaces}

Specially in Applied Mathematics, it is of great interest to quantify the decay of functions at infinity, and the Schwartz class $\cS(\rd)$ reveals to be insufficient for this. The so-called Gelfand-Shilov type spaces, introduced in  \cite{GS} turn out to be very useful. Let us recall their definition and main properties; see e.g. \cite{GS,NR} for details.
\begin{definition} Let there be given $ s, r \geq0$.
A function $f\in\cS(\rd)$ is in the Gelfand-Shilov type space $ S^{s} _{r} (\rd) $ if there exist  constants $A,B>0$ such that
\begin{equation}\label{GFdef}
|x^\a\partial^\beta f(x)| \lesssim A^{|\a|}B^{|\beta|}(\a!)^r(\beta!)^s,\quad \a,\beta\in\bN^d.
\end{equation}
\end{definition}
The space $S^{s} _{r}(\rd) $
is nontrivial if and only if $ r + s > 1,$ or $ r + s = 1$ and $r, s > 0$.  So the smallest nontrivial space with $r=s$ is provided by $S^{1/2}_{1/2}(\rd)$. Every function of the type $P(x)e^{-a|x|^2}$, with $a>0$ and $P(x)$ polynomial on $\rd$, is in the class $S^{1/2}_{1/2}(\rd)$.  We observe the trivial inclusions $S^{s_1} _{r_1}(\rd)\subset S^{s_2} _{r_2}(\rd)$ for $s_1\leq s_2$ and $r_1\leq r_2$.
Moreover, if $f\in S^{s} _{r}(\rd)$, also $x^{\delta}\partial^\gamma f$ belongs to the same space for every fixed $\delta,\gamma$. \par The action of the Fourier transform on $S^{s} _{r}(\rd) $ interchanges the indices $s$ and $r$, as explained in the following theorem.

\begin{theorem}\label{ft} For $f\in \cS(\rd)$ we have $f\in S^{s} _{r}(\rd) $ if and only if $\hat{f}\in S^{r}_{s}(\rd).$
\end{theorem}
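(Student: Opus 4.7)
The plan is to derive the estimate for $\hat{f}$ from the joint bound \eqref{GFdef} for $f$ via the elementary Fourier exchange formulas. Combining $\widehat{\partial^\gamma g}=(2\pi i\,\cdot)^\gamma\hat{g}$ with $\widehat{x^\gamma g}=(-2\pi i)^{-|\gamma|}\partial^\gamma \hat{g}$ yields the identity
\[
\xi^\alpha\partial^\beta\hat{f}(\xi)=c_{\alpha,\beta}\,\widehat{\partial^\alpha(x^\beta f)}(\xi),\qquad |c_{\alpha,\beta}|=(2\pi)^{|\beta|-|\alpha|}.
\]
Using the trivial inequality $\|\hat{h}\|_{L^\infty}\leq\|h\|_{L^1}$, the goal reduces to bounding $\|\partial^\alpha(x^\beta f)\|_{L^1}$ by $\tilde{A}^{|\alpha|}\tilde{B}^{|\beta|}(\alpha!)^s(\beta!)^r$.

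Next I would expand via Leibniz,
\[
\partial^\alpha(x^\beta f)=\sum_{\gamma\leq\min(\alpha,\beta)}\binom{\alpha}{\gamma}\frac{\beta!}{(\beta-\gamma)!}\,x^{\beta-\gamma}\partial^{\alpha-\gamma}f,
\]
and control each $L^1$ norm via the standard device $\|g\|_{L^1}\leq C_d\|(1+|x|)^{d+1}g\|_{L^\infty}$; this reduces matters to $L^\infty$ bounds on $x^{\beta-\gamma+\delta}\partial^{\alpha-\gamma}f$ with $|\delta|\leq d+1$, which are provided by the hypothesis \eqref{GFdef}, the polynomial weight being absorbed into new constants.

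The heart of the proof is then combinatorial. After inserting the joint estimate one obtains a sum over $\gamma$ whose general term, apart from geometric factors in $|\alpha|,|\beta|$, has the form $\alpha!\,\beta!\,((\alpha-\gamma)!)^{s-1}((\beta-\gamma)!)^{r-1}/\gamma!$, which must be bounded by $(\alpha!)^s(\beta!)^r$ times a weight in $\gamma$ summable over $\zd$. For $s,r\geq 1$ one uses $((\alpha-\gamma)!)^{s-1}\leq(\alpha!)^{s-1}/(\gamma!)^{s-1}$, leaving a tail dominated by $\sum_\gamma 1/(\gamma!)^{r+s-1}$; for $s,r<1$, the identity $\alpha!/(\alpha-\gamma)!=\gamma!\binom{\alpha}{\gamma}\leq\gamma!\,2^{|\alpha|}$ leaves a residual factor $\gamma!^{1-s-r}$, summable precisely because of the non-triviality hypothesis $s+r>1$. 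Mixed regimes are handled componentwise, yielding $|\xi^\alpha\partial^\beta\hat{f}(\xi)|\lesssim \tilde{A}^{|\alpha|}\tilde{B}^{|\beta|}(\alpha!)^s(\beta!)^r$, i.e.\ $\hat{f}\in S^r_s(\rd)$.

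The converse is immediate from Fourier inversion: applying the above to $\hat{f}$ gives $\hat{\hat{f}}(x)=f(-x)\in S^s_r(\rd)$, which is equivalent to $f\in S^s_r(\rd)$. The main technical obstacle is the final combinatorial step, where the two Gevrey exponents $s,r$ interact in opposite ways and must be unwound uniformly over both regimes; the non-triviality assumption $s+r>1$ is precisely what guarantees summability of the $\gamma$-sum, so the argument closes cleanly.
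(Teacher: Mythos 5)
Your argument is correct, but note that the paper itself offers no proof of Theorem \ref{ft}: it is quoted as a classical fact from the Gelfand--Shilov theory, with references to \cite{GS,NR}. So there is no ``paper proof'' to match; what you have written is the standard self-contained argument, and it checks out. The exchange identity $\xi^\alpha\partial^\beta\hat f=c_{\alpha,\beta}\,\widehat{\partial^\alpha(x^\beta f)}$ with $|c_{\alpha,\beta}|=(2\pi)^{|\beta|-|\alpha|}$ is right, the reduction $\|\hat h\|_{L^\infty}\le\|h\|_{L^1}\lesssim\|(1+|x|)^{d+1}h\|_{L^\infty}$ is legitimate for $f\in\cS(\rd)$, and the combinatorial core is sound: the general term $\frac{\alpha!\,\beta!}{\gamma!}((\alpha-\gamma)!)^{s-1}((\beta-\gamma)!)^{r-1}$ is handled by $(\alpha-\gamma)!\,\gamma!\le\alpha!$ when the relevant exponent is $\ge 1$ and by $\alpha!/(\alpha-\gamma)!\le\gamma!\,2^{|\alpha|}$ when it is $<1$, leaving $(\gamma!)^{1-s-r}$, which is summable for $s+r>1$; in the boundary case $s+r=1$ the $\gamma$-sum has at most $2^{|\alpha|}$ terms and is absorbed into the geometric constants, and for $s+r<1$ the statement is vacuous since the space is trivial. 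The converse via $\hat{\hat f}(x)=f(-x)$ is immediate. Two cosmetic points: the absorbed polynomial weight $(1+|x|)^{d+1}$ should be traded for monomials $x^\delta$, $|\delta|\le d+1$, using $(\beta-\gamma+\delta)!\le 2^{|\beta-\gamma|+|\delta|}(\beta-\gamma)!\,\delta!$ so that the extra factor only perturbs the base constant, and ``mixed regimes handled componentwise'' deserves the one-line remark that the two bounds for the $\alpha$- and $\beta$-factorials are applied independently, again yielding the exponent $(\gamma!)^{1-s-r}$. Had the authors included a proof, the natural route in their framework would have been the symmetric characterization of $S^s_r(\rd)$ by the pair of $L^\infty$ bounds on $x^\alpha f$ and $\omega^\beta\hat f$ (as in \cite{NR}), from which Fourier invariance is immediate; your direct estimate trades that machinery for the explicit Leibniz computation.
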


Therefore for $s=r$ the spaces $S^{s} _{s}(\rd)$ are invariant under the action of the Fourier transform.

We shall also need the following analyticity property of functions in $S^s_r(\rd)$, when $s<1$.
\begin{theorem}[\text{\cite[Proposition 6.1.8]{NR}}]\label{teo0}
Assume $f\in{S}^s_r(\rd)$, $0<s<1$, $r>0$. Then $f$ extends to an entire analytic function $f(x+iy)$ in $\mathbb{C}^d$, with
\begin{equation}\label{eqn:A.21}
\abs{f(x+iy)}\lesssim e^{-\epsilon \abs{x}^{\frac{1}{r}}+\delta\abs{y}^{\frac{1}{1-s}}},\quad x\in\rd,\ y\in\rd,
\end{equation}
where $\epsilon$ and $\delta$ are suitable positive constants.
\end{theorem}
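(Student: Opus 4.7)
The plan is to exploit both sides of the Gelfand--Shilov estimate \eqref{GFdef}. First I would convert the weighted $L^\infty$ bound into a pointwise estimate on the derivatives with explicit decay in $x$, namely
$$|\partial^\beta f(x)|\lesssim B_1^{|\beta|}(\beta!)^s\, e^{-\epsilon|x|^{1/r}},$$
for some constants $B_1,\epsilon>0$ independent of $\beta$. This is the standard move: for each fixed $\beta$ the estimate \eqref{GFdef} bounds $|x^\alpha\partial^\beta f(x)|$ by $C(\beta)\,A^{|\alpha|}(\alpha!)^r$ with $C(\beta)=C\,B^{|\beta|}(\beta!)^s$; then one writes $|x|^{2N}=\sum_{|\alpha|=N}\binom{N}{\alpha}x^{2\alpha}$, applies the bound with multi-indices of length $2N$, and optimizes $N$ against $|x|$ via Stirling's formula. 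The hypothesis $r>0$ is precisely what makes the optimization produce the stretched-exponential rate with exponent $1/r$.

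Since $s<1$, the derivative estimate gives $|\partial^\beta f(x)|/\beta!\lesssim B_1^{|\beta|}/(\beta!)^{1-s}$, so for every $x\in\rd$ the Taylor series
$$f(x+iy)=\sum_{\beta\in\Nn}\frac{(iy)^\beta}{\beta!}\partial^\beta f(x)$$
converges absolutely and uniformly on compact subsets of $\mathbb{C}^d$, providing the entire extension of $f$. The same term-by-term comparison yields
$$|f(x+iy)|\lesssim e^{-\epsilon|x|^{1/r}}\sum_\beta\frac{(B_1|y|)^{|\beta|}}{(\beta!)^{1-s}},$$
so the whole statement reduces to controlling the series on the right.

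The main technical step is thus the Mittag--Leffler-type bound
$$\sum_\beta\frac{(B_1|y|)^{|\beta|}}{(\beta!)^{1-s}}\lesssim e^{\delta|y|^{1/(1-s)}}$$
for some $\delta>0$. Grouping multi-indices of total length $n$ (which costs only a polynomial factor $n^{d-1}$, absorbable by the exponential) and using $\sum_{|\beta|=n}1/\beta!=d^n/n!$ together with H\"older's inequality, the sum is dominated by $\sum_n a^n/(n!)^{1-s}$ with $a\asymp|y|$. Stirling turns the general term into $\exp\bigl(n\log a-(1-s)n\log n+O(n)\bigr)$; the exponent is maximized at $n\asymp a^{1/(1-s)}$ with peak value $\asymp(1-s)a^{1/(1-s)}$, and the neighbouring terms decay geometrically, so the whole sum is of the same order as this peak. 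Combining with the $x$-decay factor gives \eqref{eqn:A.21}.

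The only genuinely non-trivial point is this last asymptotic analysis, and in particular the identification of the exponent $1/(1-s)$: it is forced by the balance between the factorial power $(1-s)$ and the monomial $a^n$, and it is precisely this exponent that makes the gap $1-s$ between $s$ and $1$ the correct parameter governing the allowed imaginary growth of the analytic extension. The analyticity itself is automatic from the convergence of the Taylor series on all of $\mathbb{C}^d$; the decay in $x$ survives from the first step because $\epsilon$ is uniform in $\beta$.
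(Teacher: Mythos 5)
Your argument is correct and is essentially the standard proof of this result; the paper itself gives no proof, quoting it directly from \cite[Proposition 6.1.8]{NR}, and the argument there follows the same route (uniform-in-$\beta$ derivative bounds with the $e^{-\epsilon|x|^{1/r}}$ factor via the equivalence of Proposition \ref{equi}, Taylor expansion in the imaginary directions, and a Mittag--Leffler-type estimate $\sum_n a^n/(n!)^{1-s}\lesssim e^{\delta a^{1/(1-s)}}$). The only point worth stating explicitly is that the pointwise Taylor sums based at different real points patch into a single entire function (e.g.\ because the series based at the origin already defines an entire $F$ agreeing with $f$ on $\rd$, the remainder tending to zero since $s<1$), after which the termwise bound at the base point $x$ gives \eqref{eqn:A.21} exactly as you wrote.
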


Let us underline the following property, which exhibits  two equivalent ways of expressing the decay of a continuous function $f$ on $\rd$. This follows immediately from \cite[Proposition 6.1.5]{NR}, see also \cite[Proposition 2.4]{fio4}, where the mutual dependence between the constants $\eps$ and $C$ below was shown.
\begin{proposition}[\text{\cite[Proposition 6.1.5]{NR}, \cite[Proposition 2.4]{fio4}}]\label{equi} Consider $r>0$ and let $h$ be a continuous function on $\rd$. Then the following conditions are equivalent:\\
(i) There exists a constant $\eps>0$ such that
\begin{equation}\label{eqn:A.7}
|h(x)|\lesssim e^{-\eps |x|^{\frac1r}},\quad x\in\rd.
\end{equation}
\noindent
(ii)  There exists a constant $C>0$ such that
\begin{equation}\label{powerdecay}
|x^\a h(x)|\lesssim C^{|\a|}(\a!)^{r},\quad x\in\rd, \,\a\in\bN^d.
\end{equation}
Indeed,
assuming \eqref{eqn:A.7}, then \eqref{powerdecay} is satisfied with $C=\displaystyle{\left(\frac { r d}{\eps}\right)^r}$. Viceversa, \eqref{powerdecay}
implies \eqref{eqn:A.7} for any $\epsilon< r(d C)^{-\frac{1}{r}}$. Also, the constant implicit in the notation $\lesssim$ in \eqref{eqn:A.7} depends only on the corresponding one in \eqref{powerdecay} and viceversa.
\end{proposition}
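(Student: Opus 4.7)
The plan is to reduce both implications to a one-dimensional calculus optimization of expressions of the form $(A/t)^n(n!)^r$ (equivalently $t^n e^{-\eps t^{1/r}}$), combined with Stirling's inequality and a little multi-index bookkeeping to handle the dimension $d$.

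For $(i)\Rightarrow(ii)$, I would start from $|h(x)|\lesssim e^{-\eps|x|^{1/r}}$ and use the trivial estimate $|x^\alpha|\leq|x|^{|\alpha|}$, so that, writing $n=|\alpha|$ and $t=|x|$, the problem reduces to bounding $\sup_{t\geq 0}t^n e^{-\eps t^{1/r}}$. An elementary calculation locates the maximum at $t=(nr/\eps)^r$, with value $(nr/\eps)^{nr}e^{-nr}$. The Stirling bound $n^n\leq e^n n!$ converts this into $(r/\eps)^{nr}(n!)^r$, and the multinomial inequality $|\alpha|!\leq d^{|\alpha|}\alpha!$ turns $(|\alpha|!)^r$ into $d^{r|\alpha|}(\alpha!)^r$. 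Collecting exponents yields exactly the constant $C=(rd/\eps)^r$ announced in the statement.

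For the converse $(ii)\Rightarrow(i)$, I would specialize \eqref{powerdecay} to $\alpha=ne_j$, obtaining $|x_j|^n|h(x)|\lesssim C^n(n!)^r$ for every $j$. Choosing the index $j$ maximizing $|x_j|$ and invoking the elementary chain $\max_k|x_k|\geq d^{-1}\sum_k|x_k|\geq d^{-1}|x|$ (the second inequality being $\ell^1$--$\ell^2$ monotonicity), this yields
\[
|h(x)|\leq \frac{(dC)^n(n!)^r}{|x|^n}\qquad\text{for every }n\in\bN.
\]
Minimizing the right-hand side over $n$ is the same one-dimensional problem, run backwards: writing $y=|x|/(dC)$, the minimum of $y^{-n}(n!)^r$ occurs at $n\approx y^{1/r}$ and Stirling gives a minimum of size $e^{-ry^{1/r}}$. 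This is precisely $e^{-r(dC)^{-1/r}|x|^{1/r}}$ up to a polynomial prefactor, and absorbing the latter by a marginal shrinking of the exponent delivers $|h(x)|\lesssim e^{-\eps|x|^{1/r}}$ for any $\eps<r(dC)^{-1/r}$.

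The analytic content of both implications boils down to a single one-dimensional optimization; the real work is the bookkeeping exercise of carrying Stirling and the norm-comparison $\max_k|x_k|\gtrsim|x|/d$ through to the end. The only point where one can slip is keeping the powers of $d$ straight: the crude bound $\max|x_k|\geq|x|/d$ gives exactly $(dC)^{-1/r}$ in the exponent, whereas the sharper $\max|x_k|\geq|x|/\sqrt d$ would give a formally better constant at the price of redoing the Stirling bookkeeping. I expect this normalization of $d$, rather than any genuinely analytic step, to be the only delicate point in matching the constants in the statement.
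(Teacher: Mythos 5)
Your argument is correct, and since the paper itself gives no proof of this proposition (it only cites \cite[Proposition 6.1.5]{NR} and \cite[Proposition 2.4]{fio4}), there is nothing to diverge from: the two one-dimensional optimizations of $t^n e^{-\eps t^{1/r}}$ combined with Stirling and $|\alpha|!\leq d^{|\alpha|}\alpha!$ (resp.\ $\max_k|x_k|\geq |x|/d$) are exactly the standard route taken in those references, and your bookkeeping reproduces the stated constants $C=(rd/\eps)^r$ and $\eps<r(dC)^{-1/r}$. The only cosmetic omission is the trivial remark that the $\alpha=0$ case plus continuity handles the region where $|x|$ is bounded in the converse direction.
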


\subsection{Time-frequency characterization of Gelfand-Shilov spaces}  Consider a distribution $f\in\cS '(\rd)$
and a Schwartz function $g\in\cS(\rd)\setminus\{0\}$ (the so-called
{\it window}).
The short-time Fourier transform (STFT) of $f$ with respect to $g$ is
\[
V_gf (x,\o) = \langle f, g_{x,\o}\rangle=\int_{\rd}e^{2\pi i t\o} \overline{g(t-x)}f(t)\,dt\,\qquad (x,\o)\in\rdd.
\]
 The  \stft\ is well-defined whenever the bracket $\langle \cdot , \cdot \rangle$ makes sense for
dual pairs of function or (ultra-)distribution spaces, in particular for $f\in
\cS ' (\rd )$ and $g\in \cS (\rd )$, $f,g\in\lrd$, or $f\in
(S^s_r) ' (\rd )$ and $g\in S^s_r (\rd )$ (see \cite{grochenig} for the full
details). \par
The following inversion formula holds for
the STFT (\cite[Proposition 11.3.2]{grochenig}): assume $g\in \cS(\rd)\setminus\{0\}$,
 $f\in L^2(\rd)$, then
\begin{equation}\label{invformula}
f=\frac1{\|g\|_2^2}\int_{\R^{2d}} V_g f(x,\o)M_\o T_x g\, dx\,d\o.
\end{equation}

\par Finally, we have the following characterization of Gelfand-Shilov functions; cf.\ \cite{elena07, medit,GZ,T2}.
\begin{theorem}\label{teo1}
 If $s\geq1/2$,
 \begin{equation}\label{zimmermann1}
 f,g\in S^s_s(\rd)\,\Rightarrow V_g f\in S^s_s(\rdd);
 \end{equation}
  if $g\in S^s_s(\rd)$, then
 \begin{equation}\label{zimmermann2}
   f\in S^s_s(\rd)\Longleftrightarrow |V_g(f)(z)|\lesssim e^{-\epsilon |z|^{1/s}},\ z\in\rdd,\ \mbox{for some} \,\,\epsilon>0.
 \end{equation}

 \end{theorem}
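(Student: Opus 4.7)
The plan is to treat the two assertions separately, with the characterization \eqref{zimmermann2} following from \eqref{zimmermann1} together with the inversion formula \eqref{invformula}.

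\emph{Assertion \eqref{zimmermann1}.} I would view the STFT as a partial Fourier transform. Set $F(t,x):=f(t)\overline{g(t-x)}$ on $\rdd$. Since products of Gelfand-Shilov functions stay in $S^s_s$ and the linear substitution $(t,x)\mapsto(t,t-x)$ preserves the class, $F\in S^s_s(\rdd)$. From the definition,
\[
V_g f(x,\omega)=\cF_1 F(\omega,x),
\]
where $\cF_1$ denotes partial Fourier transform in the first variable. By Theorem \ref{ft} applied factor-by-factor (equivalently, using the symmetric characterization of $S^s_s$ in terms of simultaneous derivative growth and spatial decay), the partial Fourier transform preserves $S^s_s(\rdd)$, so $V_g f\in S^s_s(\rdd)$. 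A more hands-on route uses Proposition \ref{equi} to reduce \eqref{zimmermann1} to the pointwise bound $|V_g f(x,\omega)|\lesssim e^{-\epsilon(|x|^{1/s}+|\omega|^{1/s})}$ along with analogous bounds for $\partial^\gamma V_g f$: decay in $x$ comes from the Gelfand-Shilov decay of $f$ and $g$ together with the elementary inequality $|t|^{1/s}+|t-x|^{1/s}\gtrsim|x|^{1/s}$; decay in $\omega$ follows by applying $\omega^\alpha$ under the integral via integration by parts and controlling $|\partial_t^\alpha[f(t)\overline{g(t-x)}]|$ uniformly in $x$ by $C^{|\alpha|}(\alpha!)^s$. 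The identities $\partial_x V_g f=-V_{g'}f$ and $\partial_\omega V_g f=-2\pi i\,V_g(tf)$ show that derivatives of the STFT are STFTs of the same type with windows/functions still in $S^s_s$, so the estimate iterates.

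\emph{Assertion \eqref{zimmermann2}.} The direction $(\Rightarrow)$ is immediate from \eqref{zimmermann1} together with Proposition \ref{equi}. For the converse, I would start from the inversion formula \eqref{invformula}, differentiate under the integral sign, and split $t^\alpha=((t-x)+x)^\alpha$ via the binomial theorem:
\[
t^\alpha\partial^\beta f(t)=\sum_{\delta\le\alpha}\sum_{\gamma\le\beta}\binom{\alpha}{\delta}\binom{\beta}{\gamma}\frac{(2\pi i)^{|\beta-\gamma|}}{\|g\|_2^2}\int_{\rdd}V_g f(x,\omega)\,x^{\alpha-\delta}\omega^{\beta-\gamma}e^{2\pi i\omega t}(t-x)^\delta g^{(\gamma)}(t-x)\,dx\,d\omega.
\]
The Gelfand-Shilov condition on $g$ gives $\sup_y|y^\delta g^{(\gamma)}(y)|\lesssim C^{|\delta|+|\gamma|}(\delta!)^s(\gamma!)^s$; Proposition \ref{equi} applied to the hypothesis on $V_g f$ yields $\int_{\rdd}|V_g f(z)|\,|z|^k\,dz\lesssim C^k(k!)^s$. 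Inserting both and summing in $\delta,\gamma$ produces $|t^\alpha\partial^\beta f(t)|\lesssim C^{|\alpha|+|\beta|}(\alpha!)^s(\beta!)^s$, which is exactly the Gelfand-Shilov membership of $f$.

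The main technical obstacle is the combinatorial step in the converse: one must use the convexity/subadditivity estimate $(\delta!)^s(\gamma!)^s\,((|\alpha-\delta|+|\beta-\gamma|)!)^s\lesssim C^{|\alpha|+|\beta|}(\alpha!)^s(\beta!)^s$ to collapse the factorials of the split exponents back into $(\alpha!)^s(\beta!)^s$, while bounding the binomial sums by a harmless geometric factor $2^{|\alpha|+|\beta|}$. This is routine in the Gelfand-Shilov theory but genuinely requires $s\ge 1/2$, the sharp threshold for nontriviality of $S^s_s(\rd)$.
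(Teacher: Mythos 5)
The paper does not prove Theorem \ref{teo1} at all: it is quoted from the literature with the citations \cite{elena07,medit,GZ,T2}, so there is no internal proof to compare against. Your argument is, in substance, the standard proof found in those references (in particular Gr\"ochenig--Zimmermann and Teofanov): \eqref{zimmermann1} via the identification of $V_gf$ with a partial Fourier transform of $f(t)\overline{g(t-x)}$, and the converse in \eqref{zimmermann2} via the inversion formula \eqref{invformula} with the binomial splitting $t^\alpha=((t-x)+x)^\alpha$ and the moment bound $\int|V_gf(z)||z|^k\,dz\lesssim C^k(k!)^s$ supplied by Proposition \ref{equi}. The outline is correct. Two small points deserve tightening. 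First, Theorem \ref{ft} concerns the full Fourier transform on $\rd$, and its ``factor-by-factor'' application to the partial transform $\cF_1$ on $\rdd$ is not literally immediate (the decay of $F$ and of $\cF F$ does not formally transfer to $\cF_1 F$ and $\widehat{\cF_1 F}$ without an argument); your ``hands-on'' alternative --- integration by parts in $t$ for the $\omega$-decay, the subadditivity-type inequality $|t|^{1/s}+|t-x|^{1/s}\gtrsim|x|^{1/s}$ for the $x$-decay, and the identities $\partial_{x_j}V_gf=-V_{\partial_jg}f$, $\partial_{\omega_j}V_gf=-2\pi i\,V_g(t_jf)$ to iterate --- is the safer route and should be regarded as the actual proof of \eqref{zimmermann1}. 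Second, the closing remark that the factorial estimate $(\delta!)^s(\gamma!)^s((|\alpha-\delta|+|\beta-\gamma|)!)^s\lesssim C^{|\alpha|+|\beta|}(\alpha!\beta!)^s$ ``genuinely requires $s\ge1/2$'' is inaccurate: it follows from $(m+n)!\le 2^{m+n}m!n!$, $|\alpha|!\le d^{|\alpha|}\alpha!$ and $\delta!(\alpha-\delta)!\le\alpha!$ for every $s>0$; the threshold $s\ge1/2$ enters only through the nontriviality of $S^s_s(\rd)$, i.e.\ the existence of a nonzero admissible window. Neither point is a gap in the mathematics, only in the attribution of where the hypotheses are used.
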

\section{Exponential sparsity of the Gabor matrix representation}\label{section4}
The
Fourier integral operator $T$ with symbol (or amplitude) $\sigma$  and phase
$\Phi$ on $\rdd$ is formally defined in \eqref{uno}.
The phase
function $\Phi(x,\o)$ is smooth on $\rdd$, and fulfills the estimates
\begin{equation}\label{A}
|\partial^\a \Phi(z)|\lesssim
C^{|\alpha|}(\alpha!)^s,\quad \alpha\in \mathbb{N}^{2d},\ |\a|\geq 2, \
z\in\rdd,
\end{equation}
for some $C>0$, $s\geq 1$, as well as the nondegeneracy condition
\begin{equation}\label{B}
    |\det\,\partial^2_{x,\eta} \Phi(x,\o)|\geq \delta>0,\quad (x,\o)\in\rdd.
\end{equation}
The symbol $\sigma$ on $\rdd$
satisfies
\begin{equation}\label{C}
    |\partial^{\a} \sigma(z)|\lesssim M(z) C^{|\alpha|}(\alpha!)^s,\quad\ \a\in\mathbb{N}^{2d},\
     z\in\rdd,
\end{equation}
for the same $s$ as in \eqref{A} and some $C>0$, and some continuous weight $M>0$ in $\rdd$. We assume here that $M$ is temperate, in the sense that
\begin{equation}\label{D}
M(z+w)\lesssim \langle z\rangle^N M(w),\quad z,w\in\rdd,
\end{equation}
for some $N>0$. \par
 We also denote by $\chi:\rdd\to\rdd$ the canonical transformation defined by $\Phi$, i.e.
\begin{equation}\label{cantra}
(x,\xi)=\chi(y,\eta)  \Longleftrightarrow \left\{
                 \begin{array}{l}
                 y=\nabla_{\eta}\Phi(x,\eta)
                 \\
                \xi=\nabla_{x}\Phi(x,\eta). \rule{0mm}{0.55cm}
                 \end{array}
                 \right.
\end{equation}
The canonical transformation
$\chi $ enjoys the following properties:

\par\medskip
\noindent {\it  (i)} $\chi:\rdd\to\rdd$ is smooth, invertible,  and
preserves the symplectic form in $\rdd$, i.e., $dx\wedge d\xi= d
y\wedge d\eta$; $\chi $ is a \emph{symplectomorphism}.
 \\
{\it  (ii)} For $z=(y,\eta)$,
\begin{equation}\label{chistima}
|\partial_z^\a \chi(z)|\lesssim C^{|\alpha|}(\alpha!)^s,\quad |\a|\geq 1;\end{equation}
{\it (iii)} There exists $\delta>0$ such that, for
$(x,\xi)=\chi(y,\eta)$,
\begin{equation}\label{detcond2}
   |\det\,\frac{\partial x}{\partial y}(y,\eta)|\geq \delta.
\end{equation}
We need a preliminary lemma. 
\begin{lemma}\label{lemmanuovo}
Let $s\geq1$ and $\varphi(z)$ a real smooth function in $\rd$ satisfying the estimates
\[
|\partial^\alpha \varphi(z)|\leq C^{|\alpha|+1}(\alpha!)^s\langle z\rangle^2, \quad \alpha\in\mathbb{N}^d,\ z\in\rd,
\]
for some constant $C>0$. Then for the same constant $C$ it turns out
\[
|\partial^\alpha e^{i\varphi(z)}|\leq (d^{s-1}2^{d+1}C^2)^{|\alpha|}\sum_{j=1}^{|\alpha|}\Big( \frac{\alpha!}{j!}\Big)^s\langle z\rangle^{2j}, \quad |\alpha|\geq 1,\ z\in\rd.
\]
\end{lemma}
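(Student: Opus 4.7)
The natural approach is induction on $|\alpha|$. The base case $|\alpha|=1$ is immediate: $|\partial_j e^{i\varphi}| = |\partial_j\varphi| \leq C^2\langle z\rangle^2$, which is dominated by the $\ell=1$ summand of the claimed right-hand side, since $K:=d^{s-1}2^{d+1}C^2 \geq C^2$.

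For the inductive step, I would apply the Leibniz rule to the identity $\partial_j e^{i\varphi} = i(\partial_j\varphi)e^{i\varphi}$, obtaining
\[
\partial^{\alpha+e_j}e^{i\varphi(z)} = i\sum_{\beta\leq\alpha}\binom{\alpha}{\beta}(\partial^{\beta+e_j}\varphi(z))(\partial^{\alpha-\beta}e^{i\varphi(z)}).
\]
The boundary term $\beta=\alpha$, where $\partial^{0}e^{i\varphi}$ has modulus $1$, is bounded directly by the hypothesis on $\varphi$ and furnishes exactly the $\ell=1$ summand of the target bound at order $\alpha+e_j$. For each $\beta<\alpha$ I would combine the hypothesis on $\varphi$ with the inductive hypothesis applied to $\partial^{\alpha-\beta}e^{i\varphi}$.

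Two elementary combinatorial facts then drive the algebra. First, $\binom{\alpha}{\beta}(\beta+e_j)!(\alpha-\beta)! \leq (\alpha+e_j)!$, which reduces to the trivial inequality $(\beta_j+1)/(\alpha_j+1)\leq 1$; using $s\geq 1$ this upgrades to $\binom{\alpha}{\beta}((\beta+e_j)!)^s((\alpha-\beta)!)^s \leq ((\alpha+e_j)!)^s$, absorbing the binomial coefficient into the factorial and reshaping the factorial into the desired $(\alpha+e_j)!$. Second, re-indexing the inner sum by $\ell=i+1$ aligns the power $\langle z\rangle^{2(i+1)}$ with the target $\langle z\rangle^{2\ell}$ while rewriting $(1/i!)^s$ as $\ell^s/(\ell!)^s$.

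The hard part is the bookkeeping to show that the prescribed $K=d^{s-1}2^{d+1}C^2$ is actually sufficient to close the induction with the correct exponent $|\alpha|+1$. There are two things to absorb: the residual multi-index series $\sum_{\beta\leq\alpha}C^{|\beta|+2}K^{-|\beta|-1}$ must be bounded independently of $\alpha$, which for $K$ large enough (compared to $C$) is controlled by a geometric majorant $\leq(1-C/K)^{-d}$, accounting for the $2^{d+1}$ prefactor; and the spurious $\ell^s$ produced by the reindexing must be absorbed, which, after writing $\ell^s=\ell\cdot\ell^{s-1}$ and bounding $\ell\leq|\alpha|+1\leq d\max_i(\alpha+e_j)_i$, folds the excess $(s-1)$-th power into the $d^{s-1}$ factor in $K$. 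The main obstacle is simply the simultaneous balancing of these constants so that exactly $K^{|\alpha|+1}$ appears as the prefactor, with no leftover polynomial factor in $|\alpha|$.
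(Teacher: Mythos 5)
Your route---induction on $|\alpha|$ via the Leibniz rule applied to $\partial_j e^{i\varphi}=i(\partial_j\varphi)e^{i\varphi}$---is genuinely different from the paper's, which expands $\partial^\alpha e^{i\varphi}$ in one stroke by the Fa\`a di Bruno formula and controls the result through the combinatorial inequality $\gamma_1!\cdots\gamma_j!\le|\alpha|!/j!$ together with a global count of the decompositions $\gamma_1+\cdots+\gamma_j=\alpha$. Unfortunately, as written your induction does not close, and the failure is exactly at the point you flag as ``the hard part'': absorbing the factor $\ell^s$ created by re-indexing $1/((\ell-1)!)^s=\ell^s/(\ell!)^s$. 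Your identity $\binom{\alpha}{\beta}(\beta+e_j)!\,(\alpha-\beta)!\le(\alpha+e_j)!$ is correct, but once you bound the ratio $(\beta_j+1)/(\alpha_j+1)$ by $1$ you have spent the entire factorial gain $(\alpha+e_j)!/\alpha!=\alpha_j+1$, and nothing is left to pay for $\ell^s$ with $\ell$ ranging up to $|\alpha|+1$. The proposed remedy---$\ell^{s-1}\le\big(d\max_i(\alpha+e_j)_i\big)^{s-1}$ folded into one factor $d^{s-1}$ per step---cannot work: for $s=1$ the factor $d^{s-1}=1$ offers no room at all, yet $\ell^s=\ell$ is still unbounded. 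Concretely, take $d=2$, $s=1$, $\alpha=(0,n)$, $j=1$, $\beta=0$: the inductive hypothesis gives for this single Leibniz term (using its $i=n$ summand) the bound $C^2K^n\langle z\rangle^{2(n+1)}$, whereas the target's $\ell=n+1$ summand is $K^{n+1}\frac{(\alpha+e_1)!}{(n+1)!}\langle z\rangle^{2(n+1)}=\frac{K^{n+1}}{n+1}\langle z\rangle^{2(n+1)}$; closing the induction would require $(n+1)C^2\le K$, which fails for large $n$.

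The gap is repairable, but only with an idea absent from your write-up: at each step you must peel off the derivative in a direction $j$ for which the entry $(\alpha+e_j)_j$ is maximal, so that $\alpha_j+1\ge(|\alpha|+1)/d$ and the problematic ratio $\ell^s(\beta_j+1)^s/(\alpha_j+1)^s$ is at most $d^s(\beta_j+1)^s$, the factor $(\beta_j+1)^s\le 2^{s|\beta|}$ then being absorbed into your geometric sum over $\beta$ (at the price of requiring $2^sC/K$ small). Even so, the per-step loss is $d^s$ rather than $d^{s-1}$, so the specific constant $d^{s-1}2^{d+1}C^2$ claimed in the lemma does not come out of this argument (harmless for the applications, but contrary to what you assert). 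The paper's Fa\`a di Bruno argument sidesteps all of this: the inequality $\gamma_1!\cdots\gamma_j!\le|\alpha|!/j!$ is proved for the full $j$-fold decomposition at once, the number of decompositions is counted once and bounded by $2^{(d+1)|\alpha|}$, and the single global conversion $|\alpha|!\le d^{|\alpha|}\alpha!$ produces the $d^{(s-1)|\alpha|}$ factor, with no per-step polynomial residue to track.
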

\begin{proof}
By the Fa\`a di Bruno formula (see, e.g., \cite[page 16]{GP}) and the hypothesis we have, for $|\alpha|\geq 1$,
\begin{align*}
|\partial^\alpha e^{i\varphi(z)}|&\leq \sum_{j=1}^{|\alpha|} \frac{1}{j!}\sum_{\gamma_1+\ldots+\gamma_j=\alpha\atop |\gamma_k|\geq1}\frac{\alpha!}{\gamma_1!\ldots\gamma_j!}|\partial^{\gamma_1}\varphi(z)|\ldots |\partial^{\gamma_j} \varphi(z)| \\
&\leq \sum_{j=1}^{|\alpha|} \frac{1}{j!}\sum_{\gamma_1+\ldots+\gamma_j=\alpha\atop |\gamma_k|\geq1}\frac{\alpha!}{\gamma_1!\ldots\gamma_j!}C^{|\gamma_1|+\ldots |\gamma_j|+j} (\gamma_1!\ldots \gamma_j!)^s\langle z\rangle^{2j}\\
&=\sum_{j=1}^{|\alpha|} C^{|\alpha|+j}\frac{\alpha!}{j!}\langle z\rangle^{2j}\sum_{\gamma_1+\ldots+\gamma_j=\alpha\atop |\gamma_k|\geq1}(\gamma_1!\ldots\gamma_j!)^{s-1}.
\end{align*}
Now will verify that
\begin{equation}\label{disug}
\gamma_1+\ldots+\gamma_j=\alpha,\quad |\gamma_k|\geq 1\quad\Longrightarrow \quad\gamma_1!\ldots\gamma_j!\leq \frac{|\alpha|!}{j!}.
\end{equation}
This then gives the desired conclusion, taking into account that $s\geq1$,  $|\alpha|!\leq d^{|\alpha|}\alpha!$
and
$$\sum_{\gamma_1+\ldots+\gamma_j=\alpha\atop |\gamma_k|\geq1} 1\leq \prod_{k=1}^d{\alpha_k+j-1 \choose j-1} \leq 2^{|\a|+d(j-1)}\leq 2^{(d+1)|\a|}.$$
 \par
It remains to prove \eqref{disug}. We argue by induction on $j$. If $j=1$ it is obviously true. Let therefore $j\geq 2$ and assume that \eqref{disug} holds for $j-1$ factors. Then
\begin{align*}
\gamma_1!\ldots\gamma_{j-1}!\gamma_j!&\leq \frac{(|\alpha|-|\gamma_j|)!}{(j-1)!}\gamma_j!\leq \frac{(|\alpha|-|\gamma_j|)!}{(j-1)!}|\gamma_j|!\\
&=\frac{|\alpha|!}{j!}\cdot\frac{j}{|\alpha|}\cdot\frac{(|\alpha|-|\gamma_j|)!|\gamma_j|!}{(|\alpha|-1)!}.
\end{align*}
Since $j\leq|\alpha|$ the derided estimate in \eqref{disug} therefore follows if we prove that the last fraction is $\leq1$. But this is clear because
\[
\frac{(|\alpha|-|\gamma_j|)!|\gamma_j|!}{(|\alpha|-1)!}=\frac{|\gamma_j|}{|\alpha|-1}\cdot\frac{|\gamma_j|-1}{|\alpha|-2}\cdot\frac{|\gamma_j|-2}{|\alpha|-3}\ldots\frac{2}{|\alpha|-|\gamma_j|+1}
\]
and in this product each fraction is $\leq 1$: indeed, $j\geq 2$ and $|\gamma_1|\geq 1$ imply $|\gamma_j|\leq |\alpha|-1$ and therefore $|\gamma_j|-k\leq |\alpha|-1-k$, for $0\leq k\leq |\gamma_j|-2$.
\end{proof}
\begin{remark}\rm Let us observe that the Fa\`a di Bruno formula, combined with the formula \eqref{disug}, gives a cheap proof that Gevrey classes are stable by functional composition, with precise estimates for the constants; we omit the details. 
\end{remark}

\begin{theorem}\label{CGelfandPseudo}  Let $s\geq1$, and suppose the phase $\Phi$ and symbol $\sigma$ satisfy \eqref{A}--\eqref{D} above. Assume $g\in S^{s/2}_{s/2}(\rd)$  Then there exists $\eps>0$ such that
\begin{equation}\label{unobis2s} |\langle T
g_{u},g_{v}\rangle|\lesssim M(v_1,u_2)\exp\big(-\eps|v-\chi(u)|^{1/s}\big),\quad
u=(u_1,u_2),\ v=(v_1,v_2)\in\rdd.
\end{equation}
\end{theorem}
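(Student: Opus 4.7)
My approach is via non-stationary phase analysis adapted to the Gevrey setting. A direct computation using the definition of $g_u = M_{u_2}T_{u_1}g$ yields the oscillatory integral representation
\begin{equation*}
\langle Tg_u, g_v\rangle = e^{2\pi i u_1 u_2}\int\!\!\int_{\rdd} e^{2\pi i \Psi(x,\eta)}\,\sigma(x,\eta)\,\hat g(\eta-u_2)\,\overline{g(x-v_1)}\,dx\,d\eta,
\end{equation*}
with the phase $\Psi(x,\eta) = \Phi(x,\eta) - u_1\eta - v_2 x$. The critical set of $\Psi$ reduces, by \eqref{cantra}, to the single point $(x,\eta)=(v_1,u_2)$ precisely when $v = \chi(u)$. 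More importantly, the non-degeneracy condition \eqref{B} together with the Lipschitz control on $\nabla_{x,\eta}\Phi$ (which follows from \eqref{A} applied with $|\alpha|=2$) yields $|\nabla\Psi(v_1,u_2)| \asymp |v-\chi(u)|$ and, more generally,
\begin{equation*}
|\nabla\Psi(x,\eta)| \gtrsim |v-\chi(u)| - C'|(x-v_1,\eta-u_2)|,
\end{equation*}
which is the quantitative engine behind the decay.

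The plan is then to integrate by parts $N$ times against the transport operator $L = (2\pi i|\nabla\Psi|^2)^{-1}\nabla\Psi\cdot\nabla_{x,\eta}$ that fixes $e^{2\pi i\Psi}$, and to establish the pointwise bound
\begin{equation*}
\bigl|({}^tL)^N F(x,\eta)\bigr| \lesssim M(v_1,u_2)\,C^N(N!)^s\,|\nabla\Psi(x,\eta)|^{-N}\,\bigl|\hat g(\eta-u_2)\,g(x-v_1)\bigr|\,\langle(x-v_1,\eta-u_2)\rangle^{K},
\end{equation*}
where $F = \sigma\,\hat g(\eta-u_2)\,\overline{g(x-v_1)}$ and the polynomial factor $\langle\cdot\rangle^K$ accounts for writing $M(x,\eta) \lesssim M(v_1,u_2)\langle(x-v_1,\eta-u_2)\rangle^K$ via \eqref{D}. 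Derivatives hitting $\sigma$ contribute the Gevrey rate $(|\alpha|!)^s$ via \eqref{C}; those falling on the windows contribute the strictly smaller rate $(|\alpha|!)^{s/2}$ because $g\in S^{s/2}_{s/2}$; and derivatives of the rational function $\nabla\Psi/|\nabla\Psi|^2$ are controlled by iterating the Gevrey estimate on $\Phi$ through a Leibniz and Fa\`a di Bruno argument whose combinatorial heart is the factorial inequality \eqref{disug} underpinning Lemma \ref{lemmanuovo}.

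One then splits the integration domain into $R_1 = \{|(x-v_1,\eta-u_2)| \leq |v-\chi(u)|/(2C')\}$ and $R_2 = \rdd\setminus R_1$. On $R_2$ the Gelfand-Shilov decay of $g$ and $\hat g$, quantified by Proposition \ref{equi} with $r=s/2$, already furnishes a factor $\exp(-\epsilon|v-\chi(u)|^{2/s})$ that is strictly stronger than the claimed rate and absorbs the polynomial factor $\langle\cdot\rangle^K$. On $R_1$ one has $|\nabla\Psi| \geq c|v-\chi(u)|$, so the integral is dominated by $M(v_1,u_2)\,C^N(N!)^s|v-\chi(u)|^{-N}$, and choosing $N \approx (|v-\chi(u)|/(eC))^{1/s}$ together with Stirling's formula produces the desired $\exp(-\epsilon|v-\chi(u)|^{1/s})$. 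The main technical hurdle is the iterated Gevrey estimate on $({}^tL)^N F$: each application of ${}^tL$ both differentiates and divides by $|\nabla\Psi|^2$, so a naive accounting over $N$ iterations would threaten a rate like $(N!)^{2s}$ rather than the required $(N!)^s$, and it is precisely the Fa\`a di Bruno bookkeeping embodied in Lemma \ref{lemmanuovo} together with the estimate \eqref{disug} that preserves the Gevrey order $s$ sharply.
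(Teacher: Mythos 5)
Your overall strategy (Gevrey non-stationary phase: iterate the transport operator $L=(2\pi i|\nabla\Psi|^2)^{-1}\nabla\Psi\cdot\nabla$, split into a region near and far from $(v_1,u_2)$, optimize over $N$) is a legitimate classical route and differs genuinely from the paper's. The paper instead Taylor-expands $\Phi$ at $(v_1,u_2)$, so that the full linear part of the phase is pulled out as a single frequency $\omega=\nabla_z\Phi(v_1,u_2)-(v_2,u_1)$ with $|\omega|\gtrsim|v-\chi(u)|$, and the problem reduces, via Proposition \ref{equi}, to moment bounds $|\omega^\alpha h(\omega)|\lesssim C^{|\alpha|}(\alpha!)^s$ for a Fourier transform. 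There the integration by parts is against $e^{2\pi i\omega z}$: no division by a gradient, no localization, and the quadratic growth $\langle z\rangle^{2j}$ of $\partial^{\beta_1}e^{2\pi i\Phi_{2,(v_1,u_2)}}$ from Lemma \ref{lemmanuovo} is exactly what the $S^{s/2}_{s/2}$ window is there to absorb (via $\langle z\rangle^{2j}|\partial^{\beta_3}G(z)|\lesssim C^{2j}(j!)^s(\beta_3!)^{s/2}$). Your route never uses the window regularity in that way, which should already make you suspicious that something in your accounting is not yet nailed down.

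Two concrete gaps. First, the heart of your argument is the bound $|({}^tL)^N F|\lesssim C^N(N!)^s|\nabla\Psi|^{-N}(\dots)$, and this is asserted, not proved. Lemma \ref{lemmanuovo} and \eqref{disug} do not supply it: that lemma controls $\partial^\alpha e^{i\varphi}$ for a single Gevrey function $\varphi$, whereas you need to control $N$-fold iterates of a first-order operator whose coefficients are the rational expressions $\partial_j\Psi/|\nabla\Psi|^2$; each iteration both differentiates these coefficients and introduces further negative powers of $|\nabla\Psi|$, and showing that the combinatorics close at the rate $(N!)^s$ (rather than $(N!)^{2s}$, as you yourself note) is precisely the content of the Gevrey non-stationary phase lemma -- a known but genuinely delicate induction that constitutes essentially the whole technical work of the proof. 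Second, the localization to $R_1$ is not free: the phase $\Psi$ can have critical points inside $R_2$ even when $v\neq\chi(u)$ (the critical set $\{\nabla_\eta\Phi=u_1,\ \nabla_x\Phi=v_2\}$ is generically nonempty; it is merely far from $(v_1,u_2)$), so the global bound with the factor $|\nabla\Psi|^{-N}$ is meaningless on $R_2$, while integrating by parts only over $R_1$ produces boundary terms. You must insert a Gevrey-class cutoff adapted to the ball of radius $\sim|v-\chi(u)|$ and track its derivatives through the same $(N!)^s$ bookkeeping. Both gaps are fixable, but until they are filled the proposal is a plan rather than a proof; the paper's reduction to Proposition \ref{equi} is designed precisely to avoid both of them.
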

\begin{proof}
A direct computation based on \eqref{base} (see e.g.\ the proof of \cite[Theorem 3.1]{fio1}) shows that
\begin{align*}
  \la T g_{u},&  g_{v}\ra \\
    =&\intrd\intrd e^{2\pi i [\Phi(x+v_1,\o+u_2)-(v_2,u_1)\cdot (x+v_1,\o)]}\sigma(x+v_1,\o+u_2) \bar{g}(x)\hat{g}(\o)\,dx
   d\o
\end{align*}
By performing a Taylor expansion of $\Phi$ around
$(v_1,u_2)$ we obtain
\begin{equation}\label{E}
|\langle T
g_{u},g_{v}\rangle|=\Big|\int_{\rdd} e^{2\pi i(\nabla_z \Phi(v_1,u_2)-(v_2,u_1))z} e^{2\pi i\Phi_{2,(v_1,u_2)}(z)} \sigma(z+(v_1,u_2))G(z)\,dz\Big|
\end{equation}
where $G(z)=G(x,\o)=\overline{g}(x)\otimes \widehat{g}(\o)$, and
 \begin{equation}
  \label{eq:c11}
\Phi_{2,(v_1,u_2)}(z)=2\sum_{|\a|=2}\int_0^1(1-t)\partial^\a
\Phi((v_1,u_2)+tz)\,dt\frac{z^\a}{\a!},\quad z=(x,\o),
\end{equation}
is the second order remainder in the Taylor formula for $\Phi$ at $(v_1,u_2)$. Observe that \eqref{A} implies the estimates
\begin{equation}\label{F}
|\partial^{\alpha} \Phi_{2,(v_1,u_2)}(z)|\lesssim C^{|\alpha|}(\alpha!)^s\langle z\rangle^2,\quad \alpha\in\mathbb{N}^{2d},\ z\in\rdd,
\end{equation}
uniformly with respect to $v_1,u_2\in\rd$.\par
Now, it is proved in \cite[Lemma 3.1]{fio1} that
\[
|\nabla_z \Phi(v_1,u_2)-(v_2,u_1)|\gtrsim |v-\chi(u)|.
\]
Hence, it is sufficient to prove that
\begin{equation*}\label{30-1} |\langle T
g_{u},g_{v}\rangle|\lesssim M(v_1,u_2)\exp\big(-\eps|\nabla_z \Phi(v_1,u_2)-(v_2,u_1)|^{1/s}\big),\quad
u,v\in\rdd.
\end{equation*}
Using the formula \eqref{E} for the left-hand side, we are reduced to proving that the function
\[
h_{v_1,u_2}(\omega):=\int_{\rdd} e^{2\pi i \omega z} e^{2\pi i \Phi_{2,(v_1,u_2)}(z)} \sigma (z+(v_1,u_2))G(z)\, dz,\quad \omega\in\rdd
\]
satisfies the estimates
\[
|h_{v_1,u_2}(\omega)|\lesssim M(v_1,u_2)\exp\big(-\epsilon|\omega|^{1/s}\big)
\]
or equivalently, by Proposition \ref{equi},
\begin{equation}\label{Fbis}
|\omega^\alpha h_{v_1,u_2}(\omega)|\lesssim M(v_1,u_2) C^{|\alpha|}(\alpha!)^s,\quad \alpha\in\mathbb{N}^{2d},\ \omega\in\rdd.
\end{equation}
Now, repeated integrations by parts and Leibniz formula give
\begin{multline}\label{G}
|\omega^\alpha h_{v_1,u_2}(\omega)|\leq (2\pi)^{-|\alpha|}\Big|\int_{\rdd} \langle z\rangle^{-2d-1} e^{2\pi i \omega z}\sum_{\beta_1+\beta_2+\beta_3=\alpha}\frac{\alpha!}{\beta_1!\beta_2!\beta_3!} \\  \times\langle z\rangle^{2d+1}\partial^{\beta_1}e^{2\pi i \Phi_{2,(v_1,u_2)}(z)} \partial^{\beta_2}\sigma (z+(v_1,u_2))\partial^{\beta_3}G(z)\, dz\Big|.
\end{multline}
Let us estimate the three derivatives above. By \eqref{F} and Lemma \ref{lemmanuovo} we have
\begin{equation}\label{stimaesf}
|\partial^{\beta_1}e^{2\pi i \Phi_{2,(v_1,u_2)}(z)}|\lesssim C^{|\beta_1|} \sum_{j=1}^{|\beta_1|}\Big(\frac{\beta_1!}{j!}\Big)^s \langle z\rangle^{2j},\quad |\beta_1|\geq1.
\end{equation}
Using \eqref{C} and \eqref{D}, the derivatives of the symbol can be controlled by
\[
|\partial^{\beta_2}\sigma (z+(v_1,u_2))|\leq M(v_1,u_2)C^{|\beta_2|}(\beta_2!)^s\langle z\rangle^{N}.
 \]
 Hence, for $|\beta_1|\geq1$,
\begin{multline}\label{H}
|\langle z\rangle^{2d+1}\partial^{\beta_1}e^{2\pi i \Phi_{2,(v_1,u_2)}(z)} \partial^{\beta_2}\sigma (z+(v_1,u_2))\partial^{\beta_3}G(z)|\\ \lesssim M(v_1,u_2)C^{|\beta_1|+|\beta_2|}(\beta_2!)^s \langle z\rangle^{N+2d+1} \sum_{j=1}^{|\beta_1|}\Big(\frac{\beta_1!}{j!}\Big)^s \langle z\rangle^{2j}|\partial^{\beta_3}G(z)|.
\end{multline}
Now, by Theorem \ref{ft},  we have $\widehat{g}\in S^{s/2}_{s/2}(\rd)$, so that $G=\overline{g}\otimes\widehat{g}\in S^{s/2}_{s/2}(\rdd)$. This gives
\begin{align*}
\langle z\rangle^{N+2d+1+2j}|\partial^{\beta_3}G(z)|&\lesssim C^{N+2d+1+2j+|\beta_3|}((N+2d+1+2j)!)^{s/2}(\beta_3!)^{s/2}\\
&\lesssim {C_1}^{2j+|\beta_3|}(j!)^s (\beta_3!)^{s/2}
\end{align*}
where we used the formula $(m+n)!\leq 2^{m+n}m!n!$ and Stirling formula. Hence
\begin{align}\label{inpiu}
\sum_{j=1}^{|\beta_1|}\Big(\frac{\beta_1!}{j!}\Big)^s \langle z\rangle^{N+2d+1+2j}|\partial^{\beta_3}G(z)|&\lesssim
\sum_{j=1}^{|\beta_1|}\Big(\frac{\beta_1!}{j!}\Big)^s {C_1}^{2j+|\beta_3|}(j!)^s (\beta_3!)^{s/2}\\&\lesssim {C_2}^{|\beta_1|+|\beta_3|}(\beta_1!)^s (\beta_3!)^{s/2}\nonumber
\end{align}
for a suitable $C_2>1$,  where we used $\sum_{j=1}^{|\beta_1|}1=|\beta_1|-1\leq 2^{|\beta_1|}$.\par
For $|\beta_1|\geq 1$, the estimate \eqref{H} can then be controlled by
\begin{multline}\label{I}
|\langle z\rangle^{2d+1}\partial^{\beta_1}e^{2\pi i \Phi_{2,(v_1,u_2)}(z)} \partial^{\beta_2}\sigma (z+(v_1,u_2))\partial^{\beta_3}G(z)|\\ \lesssim M(v_1,u_2)C^{|\beta_1|+|\beta_2|+|\beta_3|}(\beta_1!\beta_2!\beta_3!)^s\leq M(v_1,u_2)C^{|\alpha|}(\alpha!)^s.
\end{multline}
for a new constant $C>1$. An easier argument shows that the same estimate holds for $\beta_1=0$ too. \par
Finally, the desired result \eqref{Fbis} is obtained by using  the estimate \eqref{I} in \eqref{G}, together with $\sum_{\beta_1+\beta_2+\beta_3=\alpha}\frac{\alpha!}{\beta_1!\beta_2!\beta_3!}= 3^{|\alpha|}$.
\end{proof}

We now show two immediate byproducts of the above theorem, namely, exponential sparsity of the Gabor matrix representation of $T$ and the continuity on the Gelfand-Shilov spaces.
Let therefore $\G(g,\Lambda)$ be a Gabor frame for $\lrd$, with $g\in S^{s/2}_{s/2}(\rd)$, $s\geq 1$.  Under the assumptions of the previous theorem when $M\equiv1$, we have therefore the estimates
\begin{equation}\label{unobis3}
 |\langle T
g_{u},g_{v}\rangle|\lesssim\exp\big(-\eps|v-\chi(u)|^{1/s}\big),
\end{equation}
valid for $u,v\in\rdd$, in particular for $u,v\in\Lambda$.\par
It is easy to see that this implies the sparsity for the Gabor matrix in the classical -- i.e. superpolynomial -- sense; cf.\ \cite{candes, guo-labate}. Actually, here we obtain a sparsity of exponential-type, as detailed in the following result.
\begin{proposition}\label{propo}
Let $\G(g,\Lambda)$ be a Gabor frame for $\lrd$, with $g\in S^{s/2}_{s/2}(\rd)$, $s\geq 1$. Under the assumptions of Theorem \ref{CGelfandPseudo}, with $M\equiv1$, the Gabor matrix
$\langle T g_{\lambda},g_{\mu}\rangle$ is sparse in the following sense.
Let $a$ be any column
or raw of the matrix, and let
$|a|_n$ be the $n$-largest
entry of the sequence $a$.
Then $|a|_n$
satisfies
\[
|a|_n\leq C \displaystyle \exp\big({-\epsilon n^{1/(2ds)}}\big),\quad n\in\bN,
\]
for some constants $C>0,\epsilon>0$.
\end{proposition}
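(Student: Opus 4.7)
The plan is to turn the pointwise bound \eqref{unobis3} into a counting estimate for the entries above a given threshold, and then invert that count to get the decay of $|a|_n$.

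Fix a row of the Gabor matrix, say the one indexed by $\mu\in\Lambda$, so its entries are $a_\lambda=\langle T g_\lambda,g_\mu\rangle$ with $\lambda\in\Lambda$. By \eqref{unobis3} we have
\[
|a_\lambda|\lesssim \exp\big(-\epsilon|\mu-\chi(\lambda)|^{1/s}\big),\qquad \lambda\in\Lambda.
\]
For a threshold $t>0$, an entry can satisfy $|a_\lambda|\geq t$ only if $|\mu-\chi(\lambda)|\leq C\big(\log(1/t)\big)^s$ (for some $C>0$ depending on the implicit constant in the above display, and on $\epsilon$). Since $\chi$ is a symplectomorphism whose inverse also satisfies estimates of the type \eqref{chistima} (hence $\chi^{-1}$ is globally Lipschitz on $\rdd$), the set $\{\lambda\in\rdd:|\mu-\chi(\lambda)|\leq R\}=\chi^{-1}(B_R(\mu))$ is contained in a ball of radius $\lesssim R$ in $\rdd$. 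The number of lattice points of $\Lambda$ in such a ball is $\lesssim 1+R^{2d}$. Therefore
\[
\#\{\lambda\in\Lambda:|a_\lambda|\geq t\}\leq C'\big(1+\big(\log(1/t)\big)^{2ds}\big).
\]
The same argument, with $\chi$ replaced by $\chi^{-1}$, works for a column (swap the roles of $u$ and $v$ in \eqref{unobis3} and use that $|v-\chi(u)|\asymp |\chi^{-1}(v)-u|$ up to constants).

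Now rearrange this: if $|a|_n$ is the $n$-th largest entry of the sequence, then there are at least $n$ entries with absolute value $\geq |a|_n$, so
\[
n\leq C'\big(1+\big(\log(1/|a|_n)\big)^{2ds}\big).
\]
Solving for $|a|_n$ yields $|a|_n\leq C\exp\big(-\epsilon n^{1/(2ds)}\big)$ for new constants $C,\epsilon>0$, which is exactly the claim.

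The only real work is the lattice-point count, and the only point worth double-checking is that $\chi^{-1}$ really has a global Lipschitz bound (equivalently, that it spreads out lattice balls by at most a constant factor). This follows from property \emph{(i)}--\emph{(iii)} of $\chi$ listed at the beginning of Section \ref{section4}: \eqref{chistima} with $|\alpha|=1$ gives that $\chi$ is globally Lipschitz, and the nondegeneracy together with the symplectic character of $\chi$ transfers the same bound to $\chi^{-1}$, so that $|\mu-\chi(\lambda)|\asymp|\chi^{-1}(\mu)-\lambda|$. Once this is in hand, the rest is a straightforward logarithmic inversion.
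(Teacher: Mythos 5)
Your argument is correct and is essentially the standard threshold-counting proof that the paper itself does not write out but delegates to \cite[Proposition 4.5]{fio4}: bound the number of entries above a threshold $t$ by a lattice-point count in a ball of radius $\lesssim(\log(1/t))^{s}$, then invert. The one point you flagged---the global Lipschitz bound for $\chi^{-1}$---does hold, and in fact follows from the symplectic property alone: $D\chi(z)$ is a symplectic matrix, so $D\chi(z)^{-1}=J^{-1}\,D\chi(z)^{T}J$ has the same operator-norm bound as $D\chi(z)$, which is finite by \eqref{chistima} with $|\alpha|=1$.
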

Indeed, this was shown in detail in \cite[Proposition 4.5]{fio4} for any matrix satisfying an estimate of the type \eqref{unobis3}.\par
Another consequence of Theorem \ref{CGelfandPseudo} and the characterization \eqref{zimmermann2} is a continuity result on Gelfand-Shilov spaces.
\begin{proposition}\label{gsooo}
Let $s\geq 1$, and consider a symbol $\sigma\in C^\infty(\rdd)$ and a phase $\Phi$ satisfying the assumptions \eqref{A}, \eqref{B} and \eqref{C} with $M\equiv1$. Then the corresponding Fourier integral operator $T$ in bounded on $S^s_s(\rd)$.
\end{proposition}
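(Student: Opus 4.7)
The plan is to establish the Gelfand-Shilov continuity of $T$ via the STFT characterization in Theorem \ref{teo1}, using the Gabor-matrix decay \eqref{unobis2s} (with $M \equiv 1$) from Theorem \ref{CGelfandPseudo} as the central ingredient. Fix once and for all a window $g \in S^{s/2}_{s/2}(\rd) \subset S^s_s(\rd)$ (for instance, the Gaussian, which belongs to $S^{1/2}_{1/2}(\rd)$). By \eqref{zimmermann2}, membership of $f$ in $S^s_s(\rd)$ is equivalent to the sub-exponential decay $|V_g f(u)| \lesssim e^{-\epsilon|u|^{1/s}}$ for some $\epsilon>0$, and the goal is to prove the same type of estimate for $V_g(Tf)(v) = \langle Tf, g_v\rangle$.

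To this end, I would apply the inversion formula \eqref{invformula} to $f$ and push $T$ inside the integral, obtaining formally
\[
V_g(Tf)(v) = \frac{1}{\|g\|_2^2}\int_{\rdd} V_g f(u)\, \langle Tg_u, g_v\rangle\, du.
\]
The swap of $T$ with the integral is legitimate because both $V_g f$ and the kernel $\langle Tg_u, g_v\rangle$ decay sub-exponentially (by Theorem \ref{CGelfandPseudo}), so the integrand is absolutely integrable. Taking absolute values and inserting the bound \eqref{unobis2s} and the decay of $V_g f$ gives
\[
|V_g(Tf)(v)| \lesssim \int_{\rdd} e^{-\epsilon|u|^{1/s}}\, e^{-\epsilon|v-\chi(u)|^{1/s}}\, du.
\]

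The one step that requires care is converting the right-hand side into decay in $v$. Here we exploit that $\chi$ is globally Lipschitz, which follows from \eqref{chistima} with $|\alpha|=1$: indeed $|\chi(u)| \leq C(1+|u|)$. Combining this with the subadditivity $(a+b)^{1/s} \leq a^{1/s} + b^{1/s}$ valid for $s\geq 1$, we get
\[
|v|^{1/s} \leq |v-\chi(u)|^{1/s} + |\chi(u)|^{1/s} \leq |v-\chi(u)|^{1/s} + C'|u|^{1/s} + C'.
\]
Splitting $\epsilon|u|^{1/s} + \epsilon|v-\chi(u)|^{1/s} = \theta\big(\epsilon|u|^{1/s} + \epsilon|v-\chi(u)|^{1/s}\big) + (1-\theta)\epsilon|u|^{1/s} + (1-\theta)\epsilon|v-\chi(u)|^{1/s}$ for a suitable $\theta\in(0,1)$ (chosen so that the first bracket dominates $\epsilon'|v|^{1/s}$ via the inequality above), and discarding the non-negative last term, one obtains
\[
|V_g(Tf)(v)| \lesssim e^{-\epsilon'|v|^{1/s}} \int_{\rdd} e^{-(1-\theta)\epsilon|u|^{1/s}}\, du \lesssim e^{-\epsilon'|v|^{1/s}},
\]
since the remaining integral in $u$ converges. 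By \eqref{zimmermann2} this yields $Tf \in S^s_s(\rd)$, and the argument also provides continuity of $T$ in the natural topology. The only genuine technical point is the $u$-uniform absorption in the exponent; everything else is a direct composition of Theorems \ref{CGelfandPseudo} and \ref{teo1} through the inversion formula.
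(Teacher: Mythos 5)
Your argument is correct and is essentially the paper's own proof: the authors likewise fix the Gaussian window, write $V_g(Tf)(v)=\int_{\rdd}\langle Tg_u,g_v\rangle\,V_gf(u)\,du$ via the inversion formula \eqref{invformula}, and combine the kernel decay \eqref{unobis3} with the characterization \eqref{zimmermann2}. The only difference is that you spell out the convolution-type absorption $\int e^{-\epsilon|u|^{1/s}}e^{-\epsilon|v-\chi(u)|^{1/s}}\,du\lesssim e^{-\epsilon'|v|^{1/s}}$ (using the Lipschitz bound on $\chi$ and subadditivity of $t\mapsto t^{1/s}$ for $s\geq1$), a step the paper delegates to the reference \cite[Proposition 4.7]{fio4}.
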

\begin{proof}The proof is analogous to the corresponding result for pseudodifferential operators obtained in \cite[Propositions 4.7]{fio4} (but here we restrict to $s\geq1$). In short: from the inversion formula \eqref{invformula}, we get
\[
V_g (T f)(v)=\int_{\R^{2d}}\langle T g_u,
g_v\rangle \, V_g f(u) \, du,
\]
with $g(x)=e^{-\frac{\pi}{2}|x|^2}$, say. 
The estimate \eqref{unobis3} together with the characterization in \eqref{zimmermann2} then give the desired conclusion. 
\end{proof}
\section{A counterexample to super-exponential decay}
In this section we show that there is not a reasonable extension of Theorem \ref{CGelfandPseudo} to the case $s<1$. In other terms, ultra-analytic phases and symbols generally do not give super-exponential decay in \eqref{unobis2s}, even for ultra-analitic windows. \par
Consider, in dimension $d=1$, any real-valued function $\varphi(x)$, $x\in \R$, satisfying the following estimates:
\begin{equation}\label{ipo}
|\varphi^{(\alpha)}(x)|\leq C^{|\alpha|+1}(\alpha!)^{s},\quad \forall \a\geq 2,
\end{equation}
for some $s<1$ (e.g. $\varphi(x)=\cos x$).  Let $T$ be the FIO with phase $\Phi(x,\o)=x\eta+\varphi(x)$ and symbol $\sigma\equiv1$, therefore $Tf(x)=e^{2\pi i \varphi(x)}f(x)$, $\chi(y,\eta)=(y,\eta+\nabla\varphi(y))$.  Observe that the assumptions \eqref{A},\eqref{B} are fulfilled, as well as \eqref{C} with $M\equiv1$. Then, the following holds true.
\begin{proposition}\label{nometa}
For the above operator $T$, suppose the following estimate holds for some $1/2\leq s'<1$, $g\in S^{s'}_{s'}(\R)\setminus\{0\}$, $\epsilon>0$:
\begin{equation}\label{unobis2s2} |\langle T
g_{u},g_{v}\rangle|\lesssim \exp\big(-\eps|v-\chi(u)|^{1/s'}\big),\quad
u,v\in\R^{2},
\end{equation}
 Then $\varphi(x)$ is a polynomial of degree at most $2$.
\end{proposition}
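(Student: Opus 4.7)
The plan is to reduce the statement, via Gelfand--Shilov theory and Hadamard's factorization theorem, to the observation that $\varphi$ must simultaneously be an entire function of polynomial type and have bounded second derivative on $\R$, which immediately forces $\deg\varphi\le 2$.

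First I would specialize \eqref{unobis2s2} to $u=0$. Since $Tg(t)=e^{2\pi i\varphi(t)}g(t)=:h(t)$, the hypothesis becomes $|V_g h(v)|=|\langle Tg,g_v\rangle|\lesssim\exp(-\epsilon|v-(0,\varphi'(0))|^{1/s'})$ for all $v\in\R^2$. The fixed translation by $(0,\varphi'(0))$ is absorbed into the implicit constant at the cost of a slightly smaller exponent, yielding $|V_g h(v)|\lesssim\exp(-\epsilon'|v|^{1/s'})$ uniformly; by the time-frequency characterization \eqref{zimmermann2} this gives $h=e^{2\pi i\varphi}g\in S^{s'}_{s'}(\R)$.

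Next, since $s<1$ in \eqref{ipo}, $\varphi$ extends to an entire function on $\bC$, and hence $e^{2\pi i\varphi(z)}$ is entire and \emph{nowhere vanishing}. Both $h$ and $g$ lie in $S^{s'}_{s'}(\R)$ with $s'<1$, so by Theorem \ref{teo0} they extend to entire functions satisfying a growth bound of the form $\lesssim\exp(\delta|\mathrm{Im}\,z|^{1/(1-s')})$, each of finite order at most $1/(1-s')$. The identity $h(z)=e^{2\pi i\varphi(z)}g(z)$ propagates to $\bC$, and since $e^{2\pi i\varphi}$ has no zeros, the key structural observation is that $h$ and $g$ have exactly the same zero set with the same multiplicities.

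I would then apply Hadamard's factorization theorem to $h$ and $g$ with a common genus $p\le 1/(1-s')$ and the common canonical product over their shared zeros; the ratio collapses to $h(z)/g(z)=e^{P_h(z)-P_g(z)}$ with $P_h,P_g$ polynomials of degree at most $1/(1-s')$. Matching with $h(z)/g(z)=e^{2\pi i\varphi(z)}$, the entire function $2\pi i\varphi(z)-(P_h(z)-P_g(z))$ takes values in the discrete set $2\pi i\ZZ$ and is therefore constant; hence $\varphi$ is a polynomial of degree at most $1/(1-s')$. Finally, \eqref{ipo} with $|\alpha|=2$ gives $\varphi''$ bounded on $\R$, and since $\varphi''$ is now a polynomial it must be constant, yielding $\deg\varphi\le 2$.

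The heart of the argument, and the main obstacle, is the middle step: converting the apparently soft analytic information $e^{2\pi i\varphi}g\in S^{s'}_{s'}$ into the rigid algebraic conclusion that $\varphi$ is a polynomial. The decisive ingredient is the nonvanishing of $e^{2\pi i\varphi}$, which forces $h$ and $g$ to share zero sets and lets Hadamard's theorem trim the ratio $h/g$ to the exponential of a polynomial; once this is in place, the hypothesis \eqref{ipo} does the last mile.
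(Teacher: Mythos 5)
Your argument is correct, but the complex-analytic endgame is genuinely different from the paper's. The paper first upgrades \eqref{unobis2s2} to boundedness of $T$ on $S^{s'}_{s'}(\R)$ and tests on the Gaussian $f(x)=e^{-x^2}$, so that $|e^{2\pi i\varphi(z)}e^{-z^2}|=e^{-2\pi\,\mathrm{Im}\,\varphi(z)-x^2+y^2}$ is computable explicitly; combining the resulting bound on $-\mathrm{Im}\,\varphi$ with the same bound for $-\varphi$ (which satisfies the same hypotheses), it gets $|\mathrm{Im}\,\varphi(z)|\lesssim 1+x^2+|y|^{\mu}$, transfers polynomial growth to $\mathrm{Re}\,\varphi$ via the Cauchy--Riemann equations (a Borel--Carath\'eodory-type step), and concludes by Liouville. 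You instead test the matrix at $u=0$ with the window itself, read off $h=e^{2\pi i\varphi}g\in S^{s'}_{s'}(\R)$ directly from the STFT characterization \eqref{zimmermann2} (the absorption of the fixed shift $(0,\varphi'(0))$ at the cost of shrinking $\epsilon$ is legitimate since $1/s'\ge 1$), and then exploit that $e^{2\pi i\varphi}$ is zero-free, so $h$ and $g$ share zeros and multiplicities and Hadamard's factorization forces $h/g=e^{P_h-P_g}$ with polynomial exponent of degree $\le 1/(1-s')$; the discreteness of $2\pi i\ZZ$ then makes $2\pi i\varphi-(P_h-P_g)$ constant, and \eqref{ipo} with $|\alpha|=2$ finishes as in the paper. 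Your route is more economical at the front end (no need for the full boundedness statement of Proposition \ref{gsooo}, and it works with the given window rather than a specially chosen Gaussian) but pays for it with the heavier machinery of Hadamard factorization, where the paper needs only Liouville plus an elementary harmonic-conjugate estimate. Both proofs are sound; just make sure, when writing yours up, to note explicitly that $h/g$ is a priori defined only off the zero set of $g$ and extends by the shared-zeros observation, and that the genus of the two canonical products can be taken equal since the zero sets coincide.
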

\begin{proof}
The estimate \eqref{unobis2s2}  implies that if $f\in S^{s'}_{s'}(\R)$ then $Tf\in S^{s'}_{s'}(\R)$ (see  the proof of Proposition \ref{gsooo} or \cite[Propositions 4.7]{fio4}). Let now $f(x)=e^{-x^2}\in S^{1/2}_{1/2}(\R)\subseteq S^{s'}_{s'}(\R)$; then $Tf(x)=e^{2\pi i \varphi(x)}e^{-x^2}\in S^{s'}_{s'}(\R)$.
The hypothesis \eqref{ipo} with $s<1$ and Cauchy's estimates imply that $\varphi(x)$ extends to an entire function $\varphi(z)$, $z=x+iy\in\mathbb{C}$. By Theorem \ref{teo0} the function $e^{2\pi i\varphi(z)} e^{-z^2}$ satisfies the growth estimate
\[
|e^{2\pi i\varphi(z)} e^{-z^2}|\leq C e^{-cx^2+C|y|^{\mu}},\quad z=x+iy,
\]
for some constants $C,c>0$, with $\mu=1/(1-s')$. The left-hand side is equal to $e^{-2\pi {\rm Im} \varphi(z)-x^2+y^2}$, hence
\[
-{\rm Im}\, \varphi(z)\leq C(1+x^2+|y|^\mu).
\]
for a new constant $C>0$. A similar estimate holds with $-\varphi$ in place of $\varphi$, because $-\varphi$ satisfies the same assumptions as $\varphi$ and $e^{-2\pi i\varphi(x)}e^{-x^2}\in S^{s'}_{s'}(\R)$ too. Therefore we get
\[
|{\rm Im}\, \varphi(z)|\leq C(1+x^2+|y|^\mu).
\]
So, ${\rm Im}\, \varphi(z)$ has at most an algebraic growth, and the same must hold for the real part ${\rm Re}\, \varphi(z)$, by the Cauchy-Riemann equations. As a consequence, $|\varphi(z)|$ has at most an algebraic growth, therefore $\varphi(z)$ is a polynomial by the Liouville theorem.
Since the second derivative $\varphi''(x)$
is bounded by \eqref{ipo}, $\varphi(x)$ must have degree at most 2. \end{proof}

The above result shows that there is no hope to obtain super-exponential decay except for quadratic phases. Indeed, $T$ is then a metaplectic operator and for those operators we are able to obtain optimal estimates for the corresponding Gabor matrix decay, as explained in the following section.

\section{A class of generalized metaplectic operators}\label{metapoper}
We will study  the  class of
Fourier integral operators  whose canonical transformation  is a {\it linear} transformation $\chi(z) = \cA
z$ for some invertible matrix $\cA \in GL(2d,\R)$.
Since $\chi $ must preserve the symplectic form (assumption $(ii)$), $\cA
$ must be a symplectic matrix, i.e. an element of the symplectic group
$$
\Spnr=\left\{\A\in\Gltwonr:\;^t\!\A J\A=J\right\},
$$
where
$$
J=\begin{pmatrix} 0&-I_d\\I_d&0\end{pmatrix}
\, .
$$
For $z=(x,\xi)$ we shall also write
\[
\pi(z)f=M_{\xi} T_x f.
\]
Given $\cA \in \Spnr $, the
metaplectic operator $\mu (\cA )$ is  defined by the intertwining
relation
\begin{equation}\label{metap}
\pi (\cA z) = c_\cA  \, \mu (\cA ) \pi (z) \mu (\cA )\inv  \quad  \forall
z\in \R ^d \, ,
\end{equation}
where $c_\cA \in \bC , |c_{\cA } | =1$ is a phase factor (for details, see e.g. \cite{folland89}).\par
If $\chi=\A=\begin{pmatrix} A&B\\C&D\end{pmatrix}\in Sp(d,\R)$, then $(x,\xi)=(Ay+B\eta,Cy+D\eta)$ and $\det\,\frac{\partial x}{\partial y}(y,\eta)=\det A$, so that the condition \eqref{detcond2} becomes $\det A\not=0$.\par
Viceversa, to every matrix $\A=\begin{pmatrix} A&B\\C&D\end{pmatrix}\in Sp(d,\R)$ with $\det A\not=0$ corresponds a metaplectic operator $\mu(\A)$  which is a Fourier integral operator of the type \eqref{uno}, as proved in Theorem 4.51 and subsequent \emph{Remark 2} of
\cite{folland89}, recalled below.
\begin{theorem}\label{metafoll} Let
$\mathcal{A}=\begin{pmatrix} A&B\\C&D\end{pmatrix}\in Sp(d,\R)$.
 If  $\det A\not=0$ we have
\begin{equation}\label{f4}
\mu(\A)f(x)=(\det A)^{-1/2}\int e^{2\pi i \Phi(x,\eta)}\hat{f}(\eta)\,d\eta,
\end{equation}
with
\begin{equation}\label{fase}\Phi(x,\eta)=\frac12  x CA^{-1}x+
\eta  A^{-1} x-\frac12\eta  A^{-1}B\eta.
\end{equation}
\end{theorem}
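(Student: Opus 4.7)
The strategy is to use the hypothesis $\det A\neq 0$ to factor $\mathcal{A}$ into three standard generators of the symplectic group and then compose the corresponding metaplectic operators, each of which has an explicit formula. First I would verify the block factorization
\[
\mathcal{A} = \begin{pmatrix} I & 0 \\ CA^{-1} & I \end{pmatrix} \begin{pmatrix} A & 0 \\ 0 & (A^T)^{-1} \end{pmatrix} \begin{pmatrix} I & A^{-1}B \\ 0 & I \end{pmatrix}.
\]
Multiplying out the right-hand side reduces the identity to $D-CA^{-1}B=(A^T)^{-1}$, which follows from the symplectic relations $A^T D-C^T B=I$ and $A^T C=C^T A$. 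The latter, together with $B^T D=D^T B$, also shows that $CA^{-1}$ and $A^{-1}B$ are symmetric, so each of the three factors is itself a symplectic matrix.

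Next I would invoke (or derive directly from the intertwining relation \eqref{metap} applied to $\pi(z)=M_{\xi}T_x$) the explicit action of $\mu$ on each generator type: a lower shear with symmetric $P$ acts by multiplication by the chirp $e^{i\pi Px\cdot x}$; the block-diagonal dilation acts as $f(x)\mapsto|\det A|^{-1/2}f(A^{-1}x)$; and an upper shear with symmetric $Q$ acts on the Fourier side as multiplication by $e^{-i\pi Q\eta\cdot\eta}$. In each case covariance is verified by a single direct computation using the commutation rules \eqref{base}.

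Composing the three factors in the order dictated by the factorization then yields \eqref{f4}. The upper shear converts $\hat f$ into $e^{-i\pi A^{-1}B\eta\cdot\eta}\hat f(\eta)$; writing the dilation through Fourier inversion produces the linear phase $e^{2\pi i A^{-1}x\cdot\eta}$ together with the prefactor $(\det A)^{-1/2}$; and the outer chirp contributes $e^{i\pi CA^{-1}x\cdot x}$. Assembling these three contributions under a single integral $\int e^{2\pi i\Phi(x,\eta)}\hat f(\eta)\,d\eta$ produces exactly the phase \eqref{fase}. The main obstacle is conceptual rather than computational: the metaplectic representation is only a true representation on the double cover of $Sp(d,\mathbb{R})$, so the square root $(\det A)^{-1/2}$ must be interpreted on a chosen branch, with any residual unimodular ambiguity absorbed into the factor $c_{\mathcal{A}}$ of \eqref{metap}. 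Careful bookkeeping of the sign conventions in the three generator formulas (particularly the sign of the chirp for the upper shear, which is tied to the Fourier normalization $\hat f(\eta)=\int f(x)e^{-2\pi ix\cdot\eta}\,dx$) is the only place where arithmetic slips are easy to make.
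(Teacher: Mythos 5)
Your argument is correct, but note that the paper does not prove this statement at all: it is imported verbatim from Folland \cite{folland89} (Theorem 4.51 and the subsequent Remark 2), where the formula is obtained inside Folland's construction of the metaplectic representation by exhibiting the integral kernel and checking the intertwining relation \eqref{metap} directly. What you supply instead is the classical self-contained derivation via the generator factorization
\[
\mathcal{A}=\begin{pmatrix} I&0\\ CA^{-1}&I\end{pmatrix}\begin{pmatrix} A&0\\ 0&(A^T)^{-1}\end{pmatrix}\begin{pmatrix} I&A^{-1}B\\ 0&I\end{pmatrix},
\]
whose validity reduces, as you say, to $A^TD-C^TB=I$ and $A^TC=C^TA$; the symmetry of $A^{-1}B$ is most directly read off from the companion relation $AB^T=BA^T$ (equivalent to the ones you quote, since $\mathcal{A}J\mathcal{A}^T=J$ iff $\mathcal{A}^TJ\mathcal{A}=J$). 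Composing chirp, dilation and Fourier-side chirp does reproduce \eqref{f4}--\eqref{fase} with the paper's normalization $\hat f(\eta)=\int f(t)e^{-2\pi i t\eta}\,dt$. The one point that deserves to be made explicit is the logical status of the constant: the relation \eqref{metap} determines $\mu(\mathcal{A})$ only up to a unimodular factor (by irreducibility of the Schr\"odinger representation and Schur's lemma), so your composition of the three generator operators is \emph{a priori} equal to $\mu(\mathcal{A})$ only modulo such a phase, and pinning down $(\det A)^{-1/2}$ on a specific branch requires the double-cover (or a fixed section of it), exactly as you observe. Since the paper itself only ever uses $|\langle \mu(\mathcal{A})g_u,g_v\rangle|$, this ambiguity is harmless here; your approach buys a transparent, elementary proof at the cost of this phase bookkeeping, while Folland's buys the exact constant within a coherent global construction.
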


Solving \eqref{cantra} for the phase function in \eqref{fase} we
obtain  $\chi=\A$, as expected.

Observe that  the  phase $\Phi$ in \eqref{fase}  satisfies conditions \eqref{A} and \eqref{B} and the symbol $\sigma\equiv (\det A)^{-1/2}$ in \eqref{f4} fulfills \eqref{C} with exponent $s=0$ (so also for $s=1/2$) and the weight $M\equiv1$.
So these metaplectic operators satisfy the assumptions of Theorem \ref{CGelfandPseudo} for $s\geq 1$ but the best decay result would be
\begin{equation}\label{unobis2s3} |\langle \mu(\A)
g_{u},g_{v}\rangle|\lesssim \exp\big(-\eps|v-\chi(u)|\big),\quad
u,v\in\rdd,
\end{equation}
provided $g\in S^{1/2} _{1/2}(\rd)$. This decay result in not optimal, as  shown by the following motivating example. 
\begin{example} \rm Consider the Cauchy problem for the harmonic oscillator: \begin{equation}\label{pc}
\begin{cases} i \displaystyle\frac{\partial
u}{\partial t} -\frac{1}{4\pi}\Delta u+\pi|x|^2 u=0\\
u(0,x)=u_0(x),
\end{cases}
\end{equation}
 with $(t,x)\in\bR\times\bR^d$, $d\geq1$. For every fixed $t$, the solution:
$$
u(t,x)=(\cos t)^{-d/2}\intrd e^{2\pi i [\frac1{\cos t} x\eta + \frac{\tan t}2 (x^2+\eta^2)]}\hat{f}(\eta)\,d\eta,\quad t\not=\frac\pi 2 + k\pi,\,\,k\in\bZ$$
can be seen as a FIO of type \eqref{uno} with phase $\Phi_t\phas=\displaystyle{\frac1{\cos t} x\eta+
\frac{\tan t}2 (x^2+\eta^2)}$ and symbol $\sigma_t=(\cos t)^{-d/2}$. The associate canonical transformation is
\[\chi_t(y,\eta)=
\left(
\begin{array}{cc}
(\cos t)I & (-\sin t)I\\	
(\sin t)I  & (\cos t)I
\end{array}
\right)
\left(
\begin{array}{c}
y\\	
\eta
\end{array}
\right).
\]
With $g(x)=e^{-\frac{\pi}{2} |x|^2}$, an explicit computation shows the Gaussian decay
$$|\la u(t,\cdot)g_u,g_v\ra|\leq 2^{-\frac d 2} \exp\big(-\frac\pi 2 |v-\chi_t(u)|^2\big),\quad\forall u,v\in\rdd.
$$
\end{example}
More generally, consider the case of a FIO  $T$ with phase $\Phi$ in \eqref{fase} and  symbol $\sigma$ that satisfies \eqref{C}, \eqref{D}, that generalizes the classical metaplectic operator above, having a non-constant symbol. 
\begin{theorem}\label{CGelfandPseudometap}  Let $s\geq1/2$,  consider a FIO T with phase $\Phi$ in \eqref{fase} and  symbol $\sigma$ that satisfies \eqref{C}, \eqref{D}. Assume $g\in S^{s}_{s}(\rd)$.  Then there exists $\eps>0$ such that
\begin{equation*} |\langle T
g_{u},g_{v}\rangle|\lesssim M(v_1,u_2)\exp\big(-\eps|v-\chi(u)|^{1/s}\big),\quad\forall
u,v\in\rdd.
\end{equation*}
\end{theorem}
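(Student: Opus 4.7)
The plan is to follow the architecture of the proof of Theorem \ref{CGelfandPseudo}, exploiting the crucial simplification that the phase $\Phi$ in \eqref{fase} is a polynomial of degree exactly two. Consequently the second-order Taylor remainder collapses to the fixed quadratic form $Q(z):=\tfrac12\langle Hz,z\rangle$, with $H$ the constant Hessian of $\Phi$, and is in particular independent of $(v_1,u_2)$. As in the derivation of \eqref{E} I would write
\[
\langle Tg_u,g_v\rangle=\int_{\rdd}e^{2\pi i(\nabla\Phi(v_1,u_2)-(v_2,u_1))\cdot z}\,e^{2\pi iQ(z)}\,\sigma(z+(v_1,u_2))\,G(z)\,dz,
\]
where now $G=\bar g\otimes\hat g\in S^s_s(\rdd)$ by Theorem \ref{ft}. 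Combined with the lower bound $|\nabla\Phi(v_1,u_2)-(v_2,u_1)|\gtrsim|v-\chi(u)|$ from \cite[Lemma 3.1]{fio1} and Proposition \ref{equi}, everything reduces to proving
\[
|\omega^\alpha h_{v_1,u_2}(\omega)|\lesssim M(v_1,u_2)\,C^{|\alpha|}(\alpha!)^s,\qquad\alpha\in\mathbb{N}^{2d},
\]
with $h_{v_1,u_2}(\omega)=\int e^{2\pi i\omega z}e^{2\pi iQ(z)}\sigma(z+(v_1,u_2))G(z)\,dz$.

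The core new ingredient is a sharpening of Lemma \ref{lemmanuovo} tailored to a purely quadratic $\varphi=Q$: since $\partial^\gamma Q\equiv 0$ for $|\gamma|\geq 3$, the Faà di Bruno expansion of $\partial^{\beta_1}e^{2\pi iQ}$ collapses to a sum over multi-index partitions of $\beta_1$ with every part of size $1$ or $2$. Size-$1$ parts contribute linear factors in $z$ while size-$2$ parts contribute only bounded constants, so $\partial^{\beta_1}e^{2\pi iQ(z)}=P_{\beta_1}(z)\,e^{2\pi iQ(z)}$ for a polynomial $P_{\beta_1}$ of degree at most $|\beta_1|$ -- rather than $2|\beta_1|$ as would be dictated by Lemma \ref{lemmanuovo} for a generic Gevrey phase. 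A Hermite-type combinatorial count for the coefficients of $P_{\beta_1}$ yields the key estimate
\[
|\partial^{\beta_1}e^{2\pi iQ(z)}|\leq C^{|\beta_1|}\sum_{\ell=0}^{|\beta_1|}B_\ell(\beta_1)\,\langle z\rangle^\ell,\qquad \sum_{\ell=0}^{|\beta_1|}B_\ell(\beta_1)\,(\ell!)^s\leq C^{|\beta_1|}(\beta_1!)^s,
\]
valid for every $s\geq 1/2$.

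With this replacement of Lemma \ref{lemmanuovo} in hand, I would repeat the integration by parts and the Leibniz distribution \eqref{G} of $\partial^\alpha$ across $e^{2\pi iQ}$, $\sigma(z+(v_1,u_2))$ and $G(z)$. The Gevrey bound $|\partial^{\beta_2}\sigma(z+(v_1,u_2))|\lesssim M(v_1,u_2)C^{|\beta_2|}(\beta_2!)^s\langle z\rangle^N$ is unchanged; and since $G\in S^s_s(\rdd)$, Proposition \ref{equi} gives $\langle z\rangle^{\ell+N+2d+1}|\partial^{\beta_3}G(z)|\lesssim C^{\ell+|\beta_3|}(\ell!)^s(\beta_3!)^s$, which absorbs the $\langle z\rangle^\ell$ coming from each summand of the chirp derivative. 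The critical cancellation $B_\ell(\beta_1)(\ell!)^s\lesssim C^{|\beta_1|}(\beta_1!)^s$ then produces, per partition, a pointwise bound of the form $M(v_1,u_2)C^{|\alpha|}(\beta_1!)^s(\beta_2!)^s(\beta_3!)^s$; integrating against the $\langle z\rangle^{-2d-1}$ weight, summing the $\lesssim 3^{|\alpha|}$ terms and using $\beta_1!\beta_2!\beta_3!\leq\alpha!$ deliver the target estimate, whence Proposition \ref{equi} concludes the proof.

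The main obstacle is precisely the Hermite-type combinatorial estimate above on the Faà di Bruno coefficients for the chirp $e^{2\pi iQ}$. This is the only step that genuinely uses the quadraticity of the phase, and it is what simultaneously unlocks the two improvements over Theorem \ref{CGelfandPseudo}: the $(\beta_1!)^{1/2}$-type growth of derivatives of the chirp (of Hermite/involution-number type) is accommodated exactly at the threshold $s=1/2$, while the halved polynomial degree of $P_{\beta_1}$ in $\langle z\rangle$ is precisely what allows the relaxation of the window from $S^{s/2}_{s/2}$ to $S^s_s$.
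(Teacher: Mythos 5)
Your proposal is correct and follows essentially the same route as the paper: the authors likewise observe that the Taylor remainder is a fixed quadratic form, so the Faà di Bruno expansion of the chirp only involves parts of size $1$ or $2$, yielding $|\partial^{\beta_1}e^{2\pi i \Phi_{2,(v_1,u_2)}(z)}|\leq C^{|\beta_1|}\sum_{j}\frac{\beta_1!}{j!}\langle z\rangle^{2j-|\beta_1|}$ (your polynomial of degree $\leq|\beta_1|$ with coefficients $B_\ell(\beta_1)=\beta_1!/j!$, $\ell=2j-|\beta_1|$), and then verify your ``Hermite-type'' inequality $\sum_j\frac{\beta_1!}{j!}((2j-|\beta_1|)!)^s\lesssim C^{|\beta_1|}(\beta_1!)^s$ exactly at the threshold $s\geq 1/2$ before absorbing the $\langle z\rangle$ powers into the $S^s_s$ decay of $G$.
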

\begin{proof} The proof uses the same pattern of the one of Theorem \ref{CGelfandPseudo}. 
The matrix of the second order derivatives of the phase $\Phi$ is
$$\left( \partial^\a \Phi \right)_{|\a|=2}=\begin{pmatrix} CA^{-1}&(^t\!A)^{-1}\\A^{-1}&A^{-1}B\end{pmatrix}
$$
and the phase remainder \eqref{eq:c11} becomes $$\Phi_{2,(v_1,u_2)}(z)=\sum_{|\a|=2} c_\a z^\a,\quad c_\a\in\R,\quad \forall (v_1,u_2)\in\rdd.$$
 So $|\partial^\beta (\Phi_{2,(v_1,u_2)}(z))|\leq C \la z\ra^{2-|\beta|}$ for $1\leq|\beta|\leq 2$, whereas $\partial^\beta (\Phi_{2,(v_1,u_2)}(z))=0$ for every $z\in\rdd$ when $|\beta|>2$.
In this case, by the Fa\`a di Bruno formula,  the estimate \eqref{stimaesf} is replaced by
\begin{equation*}
|\partial^{\beta_1}e^{2\pi i \Phi_{2,(v_1,u_2)}(z)}|\leq C^{|\beta_1|} \sum_{j=1}^{|\beta_1|}\frac{\langle z\rangle^{2j-|\beta_1|}}{j!}\sum_{\gamma_1+\ldots+\gamma_j=\beta_1\atop 1\leq|\gamma_k|\leq2}\frac{\beta_1!}{\gamma_1!\ldots\gamma_j!} ,\quad |\beta_1|\geq1,
\end{equation*}
namely 
\[
|\partial^{\beta_1}e^{2\pi i \Phi_{2,(v_1,u_2)}(z)}|\leq C_1^{|\beta_1|}\sum_{j=1}^{|\beta_1|}\frac{\beta_1!}{j!} \langle z\rangle^{2j-|\beta_1|}
\]
for a new constant $C_1>0$. Now we have $\widehat{g}\in S^{s}_{s}(\rd)$, so that $G:=\overline{g}\otimes\widehat{g}\in S^{s}_{s}(\rdd)$, and the analog of formula \eqref{inpiu} is  here
\begin{align*}\label{inpiu2}
\sum_{j=1}^{|\beta_1|} \frac{\beta_1!}{j!} \langle z\rangle^{N+2d+1+2j-|\beta_1|}|\partial^{\beta_3}G(z)|&\lesssim C_2^{|\beta_1|+|\beta_3|}\sum_{j=1}^{|\beta_1|} \frac{\beta_1!}{j!}(2j-|\beta_1|)!^s \beta_3! ^s\\
& \lesssim C_3^{|\beta_1|+|\beta_3|} (\beta_1!\beta_3!)^s
\end{align*}
where we used 
\[
\frac{(2j-|\beta_1|)!^s\beta_1!^{1-s}}{j!}\leq \frac{(2j)!^s\beta_1!^{1-2s}}{j!}\lesssim C_4^{|\beta_1|}\frac{j!^{2s-1}}{|\beta_1|!^{2s-1}}\leq C_4^{|\beta_1|}.
\]
(The last inequality holds because $j\leq |\beta_1|$ and $s\geq 1/2$). 
\end{proof}



\begin{thebibliography}{10}


\bibitem{AS78}
K.~Asada and D.~Fujiwara.
\newblock On some oscillatory integral transformations in {$L^{2}({\bf
  R}^{n})$}.
\newblock {\em Japan. J. Math. (N.S.)}, 4(2):299--361, 1978.





\bibitem{bony1}
J. Bony.
\newblock Op\'erateurs int\'egraux de {F}ourier et calcul de
             {W}eyl-{H}\"ormander (cas d'une m\'etrique symplectique),
{\it Journ\'ees ``\'{E}quations aux {D}\'eriv\'ees {P}artielles''
             ({S}aint-{J}ean-de-{M}onts, 1994)},
             \'Ecole Polytech., Palaiseau, 1--14, 1994.


\bibitem{bony3}
J. Bony.
\newblock  Evolution equations and generalized {F}ourier integral
             operators,
{\it Advances in phase space analysis of partial differential
             equations}, Progr. Nonlinear Differential Equations Appl., 78, 59--72, Birkh\"auser Boston Inc., Boston, MA, 2009.



%
%

\bibitem{candes} E. J. Cand\`es and L. Demanet. The curvelet representation of wave propagators is optimally sparse. {\it Comm. Pure Appl. Math.}, 58:1472--1528, 2004.

\bibitem{cddy} E. J. Cand\`es, L. Demanet, D. L. Donoho and L. Ying. Fast discrete curvelet transforms. {\it Multiscale Model. Simul.}, 5:861--899, 2005.

\bibitem{C-N1} M. Cappiello and F. Nicola, Holomorphic extension of solutions of semilinear elliptic equations. {\it Nonlinear Analysis}, 74:2663--2681, 2011.

\bibitem{C-N2} M. Cappiello and F. Nicola. Regularity and decay of solutions of nonlinear harmonic oscillators. {\it Adv. in Math.} 229:1266--1299, 2012. 



\bibitem{elena07}
E.~Cordero.
Gelfand-Shilov  window classes for weighted modulation spaces. \textit{ Int. Tran. Spec. Funct.}, 18(11):809--817, 2007.

      



\bibitem{fio5}
E.~Cordero and F. Nicola.
\newblock Boundedness of Schr\"odinger type propagators on modulation spaces. {\it J. Fourier Anal. Appl.}, 16(3):311--339, 2010.

\bibitem{jmpa} E. Cordero, F. Nicola, K. Gr\"ochenig and L. Rodino. Wiener algebras of Fourier integral operators, J. Math. Pures Appl., 2012, to appear. 

\bibitem{fio1}
E.~Cordero, F. Nicola and L. Rodino. Time-frequency
analysis of Fourier integral operators. {\it Commun. Pure
Appl. Anal}., 9(1):1--21, 2010.


\bibitem{fio3}
E.~Cordero, F. Nicola and L. Rodino,
\newblock Sparsity of  Gabor representation of Schr\"odinger propagators.
\newblock {\em Appl. Comput. Harmon. Anal.}, 26(3):357--370, 2009.

\bibitem{fio4}
E.~Cordero, F. Nicola and L. Rodino,
\newblock Gabor representations of evolution operators.
\newblock {\em Trans. Amer. Math. Soc.}, to appear. Available at arXiv:1209.0945.

\bibitem{fio5}
E.~Cordero, F. Nicola and L. Rodino,
\newblock Wave packet analysis of Schr\"odinger equations in analytic function spaces. Preprints 2013. Available at ArXiv:1310.5904. 

\bibitem{medit}
E.~Cordero,  S.~Pilipovi\'c, L.~Rodino and N.~Teofanov.
\newblock  Localization operators and exponential weights for \modsp s.
\newblock {\em  Mediterranean  J. Math.}, 2(4):381--394, 2005.


\bibitem{B18} A. C\'ordoba and C. Fefferman. Wave packets and Fourier integral operators. {\em Comm. Partial Differential Equations}, 3(11):979--1005, 1978.

\bibitem{ibero13} I. Daubechies. The wavelet transform, time-frequency localization and signal analysis. {\em IEEE
Trans. Inform. Theory,} 36(5):961--1005, 1990.
\bibitem{5mail} C. Fefferman, The uncertainty principle. {\it Bull. Amer. Math. Soc}, 9(2):129--206, 1983.


\bibitem{gabor} D. Gabor, Theory of communications. {\it J. IEE (London)}, 93(III):429--457, 1946.

\bibitem{GP}
T. V.~Gramchev and P. R. Popivanov. {\it Partial differential equations}. Mathematical Research, 108, Wiley-VCH Verlag Berlin GmbH, Berlin, 2000.

\bibitem{GS} I. M. Gelfand and G. E. Shilov.
\newblock\textit{Generalized
functions II, III}.
\newblock Academic Press, 1967.



\bibitem{folland89}
G.~B. Folland.
\newblock {\em Harmonic analysis in phase space}.
\newblock Princeton Univ. Press, Princeton, NJ, 1989.

\bibitem{grochenig}
K.~Gr{\"o}chenig. {\it Foundations of time-frequency
analysis}. Birkh\"auser Boston, Inc., Boston, MA, 2001.


%


  
\bibitem{GZ} K. Gr\"ochenig and G. Zimmermann. Spaces of test functions via the STFT.
{\em Journal of Function Spaces and Applications}, 2(1):25--53, 2004.

\bibitem{K-T} K. Kajitani and G. Taglialatela. Microlocal smoothing effect for the Schr\"odinger equations in Gevrey spaces. {\it J. Math. Soc. Japan}, 55(4):855--896, 2003. 

%
%
\bibitem{guo-labate}
K.~Guo and D. Labate.
\newblock{Sparse shearlet representation of {F}ourier integral  operators}
\newblock{\em Electron. Res. Announc. Math. Sci.},
\newblock 14:7--19, 2007.

\bibitem{hormander} L. H\"ormander, {\it The analysis of linear partial differential operators}, Vol. 4, Springer, 1985.
%


%





\bibitem{M-N-S2} A. Martinez, S. Nakamura and V. Sordoni. Analytic smoothing effect for the Schr\"odinger equation with long rang perturbation. {\it Comm. Pure Appl. Math.}, 59(9):1330--1351, 2006. 

\bibitem{M-N-S2bis} A. Martinez, S. Nakamura and V. Sordoni. Analytic wave front set for solutions to Schr\"odinger equations. {\it Adv. in Math.}, 222(4):1277--1307, 2009. 

\bibitem{M-R-Z1ter} Y. Morimoto, L. Robbiano and C. Zuily. Remark on the analytic smoothing for the Schr\"odinger equation. {\it Indiana Univ. Math. J.}, 49:1563--1579, 2000. 


\bibitem{NR}
 F. Nicola and L. Rodino.  {\it Global pseudo-differential calculus on Euclidean spaces}. Pseudo-Differential Operators. Theory and Applications, 4, Birkh\"auser Verlag, Basel, 2010.

\bibitem{R-Z1} L. Robbiano and C. Zuily. Microlocal analytic smoothing effect for the Schr\"odinger equation. {\it Duke Math. J.}, 100:93--129, 1999. 

\bibitem{R-Z1bis} L. Robbiano and C. Zuily, Analytic theory for the quadratic scattering wave front set and application to the Schr\"odinger equation. {\it Ast\'erisque} 283, 2002. 

\bibitem{rodino-book} L. Rodino, {\it Linear partial differential operators in Gevrey spaces}. 
World Scientific Publ., 1993, Singapore.

\bibitem{6mail} A. Seeger, C. Sogge, E.\,M. Stein, Regularity property of Fourier integral operators, {\it Ann. of Math.}, 134:231--251, 1991.






\bibitem{2mail} T. Tao, From rotating needles to stability of waves: emerging connections between combinatorics, analysis, and PDE. {\it Notices Amer. Math. Soc.}, 48:294--303, 2001. 

\bibitem{3mail} D. Tataru, Strichartz estimates for second order hyperbolic operators with nonsmooth coefficients. III. {\it J. Amer. Math. Soc.}, 15(2):419--442, 2002.

\bibitem{7mail} D. Tataru, Phase space transforms and microlocal analysis. In {\it Phase space analysis of partial differential equations}, Vol. II, 505--524, Pubbl. Cent. Ric. Mat. Ennio Giorgi, Scuola Norm. Sup., Pisa, 2004.

\bibitem{4mail} D. Tataru and D. Geba, A phase space transform adapted to the wave equation, {\it Comm. Partial Differential Equations}, 32(7-9):1065--1101, 2007.

\bibitem{T2} N. Teofanov,
\newblock Ultradistributions and time-frequency analysis, in
\newblock {\em   Pseudo-differential Operators and Related Topics,
Operator Theory: Advances and Applications},
P.~Boggiatto, L.~Rodino, J.~Toft, M.W.~Wong,  editors,
\newblock Birkh\"auser, 164:173--191, 2006.

\bibitem{1mail} C. Thiele, Wave Packets Analysis. {\it CBMS Series, Amer. Math. Soc.}, 2006. 








%
%
%
%

\end{thebibliography}
\end{document}